\documentclass[12pt]{amsart} 

\usepackage{amsmath,amssymb,amscd}
\usepackage{newtxtext,newtxmath}
\usepackage[utf8x]{inputenc}

\usepackage{color}
\usepackage{hyperref}
\usepackage[all]{xypic}
\usepackage{enumerate}

\renewcommand{\geq}{\geqslant}

\newcommand{\CH}{\text{CH}}
\newcommand{\CB}{\text{CB}}
\newcommand{\Tor}{\text{Tor}}
\newcommand{\Ext}{\text{Ext}}
\newcommand{\Hom}{\text{Hom}}
\newcommand{\Graph}{\text{\sc Graph}}
\newcommand{\Set}{\text{\sc Set}}

\newcommand{\C}[1]{\mathcal{#1}}
\newcommand{\B}[1]{\mathbb{#1}}
\newcommand{\xla}[1]{\xleftarrow{~#1~}}
\newcommand{\xra}[1]{\xrightarrow{~#1~}}
\newcommand{\lmod}[1]{{#1}\text{-{\bf Mod}}}

\newcommand{\eotimes}[1]{\otimes_{#1}}
\newcommand{\pair}[2]{\text{#1}\atop\text{#2}}

\newtheorem{theorem}{Theorem}[section]
\newtheorem{lemma}[theorem]{Lemma}
\newtheorem{proposition}[theorem]{Proposition}

\theoremstyle{definition}
\newtheorem{definition}[theorem]{Definition}
\newtheorem{remark}[theorem]{Remark}
\newtheorem{example}[theorem]{Example}

\numberwithin{equation}{section}

\usepackage[margin=24mm,top=28mm]{geometry}

\setlength{\parindent}{0mm}
\setlength{\parskip}{1mm}

\title{Noncommutative fibrations}

\author{Atabey Kaygun}
\email{kaygun@itu.edu.tr}
\address{Department of Mathematics, Istanbul Technical University, Istanbul, Turkey.}

\begin{document}
\begin{abstract}
  We show that faithfully flat smooth extensions of associative unital algebras are
  reduced flat, and therefore, fit into the Jacobi-Zariski exact sequence in Hochschild
  homology and cyclic (co)homology even when the algebras are noncommutative or infinite
  dimensional. We observe that such extensions correspond to étale maps of affine schemes,
  and we propose a definition for generic noncommutative fibrations using distributive
  laws and homological properties of the induction and restriction functors.  Then we show
  that Galois fibrations do produce the right exact sequence in homology.  We then
  demonstrate the versatility of our model on a geometro-combinatorial example.  For a
  connected unramified covering of a connected graph $G'\to G$, we construct a smooth
  Galois fibration $\C{A}_{G}\subseteq\C{A}_{G'}$ and calculate the homology of the
  corresponding local coefficient system.
\end{abstract}
\maketitle

\section*{Introduction}

Based on homological connections between the induction $Ind_{A^e}^{B^e}$ and the
restriction $Res^{B^e}_{A^e}$ functors, in this paper we gather further evidence that the
Hochschild-Jacobi-Zariski exact sequence
\begin{equation}\label{JZ}
  \cdots \to HH_{n+1}(B|A)\to HH_n(A|\Bbbk)\to HH_n(B|\Bbbk)\to HH_n(B|A)\to \cdots
\end{equation}
is the long exact sequence of associated to a fibration of ordinary (noncommutative)
affine spaces $Spec(B)\to Spec(A)$ when the extension $A\subseteq B$ is reduced flat.
First, we prove in Theorem~\ref{FlatRelative} that for faithfully flat extensions reduced
flatness is equivalent to $A^e$-flatness of $\Sigma_{B|A}$ the kernel of the relative multiplication map $B\otimes_A B\to B$.
Then in Theorem~\ref{AlmostSmoothBegetsReducedFlat} we obtain a
\emph{faithfully flat étale descent} result analogous to~\cite[Theorem
(0.1)]{WeibelGeller:EtaleDescent} but for all associative unital algebras not just
commutative ones: We show that any faithfully flat smooth extension $A\subseteq B$ is
reduced flat, and therefore, the geometric fibre of $Spec(B)\to Spec(A)$ is homologically
trivial.  The result follows from the fact that now we have the Hochschild-Jacobi-Zariski
exact sequence with coefficients~\eqref{HomologicalHJZ} for faithfully flat smooth
extensions, and the fact that the restriction functor already induces the correct
isomorphisms in homology by Proposition~\ref{AlmostSmoothness}.

There is an analogous Jacobi-Zariski exact sequence for extensions of commutative algebras
in André-Quillen (co)homology without any further restriction on the
extension~\cite{Quillen:AQCohomology, Andre:AQCohomology}.  However, our~\eqref{JZ} is
exact for commutative and noncommutative algebras alike even when they are not finite
dimensional or essentially of finite type.  The results in this paper came from an
observation that smooth extensions and reduced flat extensions are related in terms of
homological properties of their induction and restriction functors: while the
multiplication map $Ind_{A^e}^{B^e}A\to B$ induces a Hochschild cohomological equivalence
in degrees higher than 1 for a smooth extension, for a reduced flat extension one gets a
Hochschild homological equivalence for the same range for the natural $A$-bimodule
embedding $A\to Res^{B^e}_{A^e}B$.  We refer the reader to Subsection~\ref{Relations} for
a detailed analysis of these connections.

Based on the results we obtained in Section~\ref{sect:ReducedFlatSmooth}, we
propose that a special class of extensions of unital associative algebras that
contains the class of Hopf-Galois
extensions~\cite{Schneider:PrincipalHomogeneousSpaces} constitutes an appropriate
model for generic smooth noncommutative fibrations.  We define a noncommutative
(unramified) fibration as a flat extension $A\subseteq B$ that admits a (bijective)
distributive law~\cite{Beck:DistributiveLaws}
$\bowtie\colon C\otimes B\to B\otimes C$ together with an epimorphism of
$B$-bimodules $can\colon Ind_{A^e}^{B^e}A\to B\bowtie C$ that satisfies an
invariance condition $A=B^C$. Then in Theorems~\ref{MainHJZ} and ~\ref{MainCJZ} we
get the correct fibration sequence with the appropriate fibre for the map
$Spec(B)\to Spec(A)$ for Galois fibrations.  Since we formulate our extensions in
terms of distributive laws instead of cleft Hopf-Galois extensions, the extensions
we consider model generic fibrations, not just principal fibrations.  We refer the
reader to Section~\ref{sect:Fibrations} for details.

We demonstrate the versatility of our model on a geometro-combinatorial example.
For a connected unramified covering $G'\to G$ of a connected graph $G$, we
construct an unramified reduced flat extension $\C{A}_{G}\subseteq\C{A}_{G'}$ of
noncommutative algebras in Subsection~\ref{HochschildOfGraphAlgebras}.  We then
show in Theorem~\ref{UnramifiedCovering} that for such extensions, we get the right
analogue of the long exact sequence of a fibration in cyclic homology.  Then we
extend our result to local coefficient systems on graphs and their cohomology in
Theorem~\ref{LocalCoefficients}.

The particular result we obtain in Theorem~\ref{LocalCoefficients}, combined with
Burghelea's~\cite{Burghelea:CyclicHomologyOfGroupRings}, is consistent
with~\cite[Chapter III, Theorem 2.20]{Milne:EtaleCohomology} and~\cite[Example
2.2]{WeibelGeller:EtaleDescent} where one obtains the homology of a Galois
coverings of schemes from a Hochschild-Serre hyper-homology spectral sequence in
which they combine the group cohomology of the structure group of the fibration and
the homology of the base.  This consistency indicates that our proposal is sound
geometrically.  Since Theorem~\ref{LocalCoefficients} is a direct consequence of
Theorem~\ref{MainCJZ}, we also see that for cleft Hopf-Galois extensions the
Hochschild homology of such an extension relative to the base is the homology of
the underlying Hopf algebra.  Hence our proposal is sound algebraically as well.

\subsection*{Plan of the article}

We recall the results we need on reduced flat and smooth extensions in
Section~\ref{sect:ReducedFlatSmooth}.  Our Proposition~\ref{AlmostSmoothness} and
Proposition~\ref{InductionEquivalence} identify the reduced flat extensions and
smooth extensions in terms of homological conditions on the induction and
restriction functors.  Then we define unramified and Galois fibrations, and discuss
connections between various types of extensions and fibrations in
Section~\ref{sect:Fibrations}.  In Subsection~\ref{subsect:Engine} we prove our
main technical results.  First, we show that the relative Hochschild homology of a
Galois fibration yields the correct homology of the fibre in Theorem~\ref{MainHJZ}.
Then in Theorem~\ref{MainCJZ}, we show that for reduced flat Galois fibration, we
have the required long exact sequences in Hochschild homology and cyclic
(co)homology.  We apply our main results to graph extension algebras in
Section~\ref{sect:GraphCovering}.  In Subsection~\ref{subsect:LocalCoefficients},
we define local coefficient systems on graphs, and finally in
Theorem~\ref{LocalCoefficients} we prove that the relative homology of a
noncommutative fibration with coefficients in a local system gives us the group
homology of the local coefficients.

\subsection*{Notation and conventions}

Throughout this article, we are going assume $\Bbbk$ is a ground field of
characteristic 0.  All unadorned tensor products $\otimes$ are taken over $\Bbbk$.
All algebras are assumed to be over $\Bbbk$, and all are unital and associative.
However, they need not be commutative or finite dimensional.  We use $\Sigma_{B|A}$ to denote the kernel of the relative multiplication map $B\otimes_A B\to B$ for an algebra extension $A\subseteq B$.  All modules are
assumed to be left modules unless otherwise stated.  We use $\lmod{A}$ to denote a
small category of $A$-modules.  For an algebra $A$, we use $A^e$ to denote the
enveloping algebra $A\otimes A^{op}$.  Thus modules over $A^e$ are exactly
bimodules over $A$.  We use the homological convention for complexes: all complexes
are positively graded and differentials reduce the degree by one.  We use $\Tor^A$
and $\Ext_A$ to denote the derived bifunctors of the tensor product $\eotimes{A}$
and $\Hom_A$-bifunctors, respectively.  We are going to use $\CB_*$ to denote the
bar complex, and $\CH_*$ to denote the Hochschild complex.  Also, we use $HH_*$ for
the Hochschild homology, and $HC_*$ for the cyclic homology functors.  All graphs
are assumed to be undirected and simple, but they need not be finite.  In
particular, we have no loops on a vertex, and no multiple edges between any two
vertices.

\subsection*{Acknowledgments}

This work is partially supported by NCN grant UMO-2015/19/B/ST1/03098.

\section{Reduced flat and almost smooth extensions}\label{sect:ReducedFlatSmooth}

For this section, we assume we have an extension of unital associative algebras
$A\subseteq B$ such that $B$ viewed as a left and right $A$-module is flat.

\subsection{Relative Hochschild (co)homology}

Given an extension $A\subseteq B$, the relative two sided bar complex $\CB_*(B|A)$
is defined to be the graded $B^e$-module
\[ \CB_n(B|A) = \underbrace{B\eotimes{A}\cdots\eotimes{A} B}_\text{$n+2$-times} \]
For every $n\geq 1$, the differentials $d_n\colon \CB_n(B|A)\to \CB_{n-1}(B|A)$
are defined as
\[ d_n(u_0\otimes\cdots\otimes u_{n+1}) = \sum_{i=0}^n (-1)^i (\cdots\otimes
  u_{i-1}\otimes u_iu_{i+1}\otimes u_{i+2}\otimes\cdots) \] for every homogeneous
tensor $u_0\otimes\cdots\otimes u_{n+1}\in \CB_n(B|A)$, then extended linearly.
Since $\CB_*(B|A)$ is a $(B^e,A^e)$-projective resolution of $B$ as a
$B^e$-module, for any $B^e$-module $X$ we write the relative Hochschild chain and
cochain complexes as
\[ \CH_*(B|A,X) := \CB_*(B|A)\eotimes{B^e} X \text{ and } \CH^*(B|A,X) :=
  \Hom_{B^e}(\CB_*(B|A),X) \] that yield the relevant relative Hochschild homology
and cohomology groups $HH_*(B|A,X)$ and $HH^*(B|A,X)$, respectively.  In the case
$A=\Bbbk$, we simply write $\CB_*(B)$ and $HH_*(B)$ instead of $\CB_*(B|\Bbbk)$ and
$HH_*(B|\Bbbk)$.

\subsection{Induction and restriction}

We have two related functors:
\begin{enumerate}[(i)]
\item Induction $Ind_{A^e}^{B^e}X := B\eotimes{A} X\eotimes{A} B$, and
\item Restriction $Res^{B^e}_{A^e}Y$ where we view $Y$ as an $A$-bimodule via the
  inclusion $A\subseteq B$
\end{enumerate}
for every $X\in \lmod{A^e}$ and $Y\in\lmod{B^e}$.

\begin{lemma}\label{PrimaryLemma}
  For every $X\in\lmod{A^e}$ we have
  \[ \Tor^{A^e}_n(Res^{B^e}_{A^e}B,X) \cong \Tor^{B^e}_n(B,Ind^{B^e}_{A^e}X) \] for
  every $n\geq 0$.
\end{lemma}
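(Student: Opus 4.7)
The plan is to view this as a derived version of a natural isomorphism of functors, then use the exactness properties of the induction and restriction functors to pass from the underived statement to the $\Tor$ statement.

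First I would establish the underived identity: for every $A$-bimodule $X$ and every $B$-bimodule $Y$ there is a natural isomorphism
\[ Res^{B^e}_{A^e}(Y)\eotimes{A^e} X \;\cong\; Y\eotimes{B^e} Ind_{A^e}^{B^e}(X). \]
The map sends $y\otimes x$ to $y\otimes(1\otimes x\otimes 1)$; its inverse uses the relation $y\otimes(b_0\otimes x\otimes b_1)=(b_1\cdot y\cdot b_0)\otimes(1\otimes x\otimes 1)$ coming from the balancing over $B^e$. This is the tensor-product analogue of the standard induction-restriction adjunction, and verifying well-definedness on both sides is a direct check.

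Next I would resolve $X$ by a projective resolution $P_\bullet\to X$ in $\lmod{A^e}$. Two facts are needed about the induction functor applied to this resolution. First, since $B$ is flat on each side over $A$, the functor $Ind_{A^e}^{B^e}(-)=B\eotimes{A}(-)\eotimes{A}B$ is exact, so $Ind_{A^e}^{B^e}(P_\bullet)\to Ind_{A^e}^{B^e}(X)$ is still a resolution. Second, $Ind_{A^e}^{B^e}$ takes projectives to projectives: it sends the free bimodule $A^e=A\otimes A$ to $B\eotimes{A}A\otimes A\eotimes{A}B\cong B^e$, and being a left adjoint it commutes with direct sums and preserves summands. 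Hence $Ind_{A^e}^{B^e}(P_\bullet)$ is a projective resolution of $Ind_{A^e}^{B^e}(X)$ in $\lmod{B^e}$.

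Finally I would apply the natural isomorphism of step one termwise to $P_\bullet$ with $Y=B$, obtaining an isomorphism of chain complexes
\[ Res^{B^e}_{A^e}(B)\eotimes{A^e} P_\bullet \;\cong\; B\eotimes{B^e} Ind_{A^e}^{B^e}(P_\bullet), \]
and then take homology on both sides. The left side computes $\Tor^{A^e}_n(Res^{B^e}_{A^e}B,X)$ by definition, and by the previous paragraph the right side computes $\Tor^{B^e}_n(B,Ind_{A^e}^{B^e}X)$. The only real subtlety is writing down the underived isomorphism cleanly; everything after that is a formal derived-functor calculation using the two properties of $Ind_{A^e}^{B^e}$ guaranteed by the flatness hypothesis on $B$.
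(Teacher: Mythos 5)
Your argument is correct, and it reaches the result by resolving the opposite variable from the one the paper resolves. The underived identity you start from, $Res^{B^e}_{A^e}(Y)\eotimes{A^e}X\cong Y\eotimes{B^e}Ind_{A^e}^{B^e}(X)$, is exactly the identity the paper also relies on (it appears there as the middle equality $\CB_*(B)\eotimes{A^e}X=\CB_*(B)\eotimes{B^e}(B\eotimes{A}X\eotimes{A}B)$), so the combinatorial heart is shared. The difference is in how each side is derived: the paper resolves the first argument, observing that the bar complex $\CB_*(B)$ is simultaneously a free $B^e$-resolution of $B$ and, because $B$ is flat over $A$ on both sides, a flat $A^e$-resolution of $Res^{B^e}_{A^e}B$; a single complex therefore computes both $\Tor$ groups at once and the lemma falls out in three lines. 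You instead resolve $X$ projectively over $A^e$ and transport the resolution through $Ind_{A^e}^{B^e}$, which requires the two auxiliary facts you correctly supply — exactness of $Ind_{A^e}^{B^e}$ (again from two-sided flatness of $B$ over $A$) and preservation of projectives (from $Ind_{A^e}^{B^e}(A^e)\cong B^e$ plus compatibility with direct sums and summands). Your route is a little longer but makes explicit where the flatness hypothesis and the adjunction enter, and it has the minor advantage of producing genuine projective resolutions on both sides rather than mixing a flat resolution on one side with a free one on the other; the paper's route buys brevity by exploiting the special resolution $\CB_*(B)$ that works for both module categories simultaneously. Both are standard balanced-$\Tor$ arguments and either is acceptable.
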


\begin{proof}
  We observe that the $\CB_*(B)$ is a free resolution of the $B$-bimodule $B$.
  Since we assumed $B$ is a right and left flat $A$-module, we also have that
  $\CB_*(B)$ is a flat resolution of the $A$-bimodule $B$.  Then
  \begin{align*}
    \Tor^{A^e}_n(Res^{B^e}_{A^e}B,X)
    \cong & H_n \left(\CB_*(B)\eotimes{A^e} X\right) \\
    = & H_n \left(\CB_*(B)\eotimes{B^e} (B\eotimes{A} X\eotimes{A} B)\right)\\
    \cong & \Tor^{A^e}_n(B,Ind^{B^e}_{A^e}X)
  \end{align*}
  as we wanted to prove.
\end{proof}

\begin{remark}
  By Lemma~\ref{PrimaryLemma} we have a sequence of natural morphisms
  \begin{align*}
    HH_n(A,Y) = \Tor^{A^e}_n(A,Y) & \xra{\xi_Y}\\
    \Tor^{A^e}_n & (Res^{B^e}_{A^e}B,Y)
    \cong \Tor^{B^e}_n(B,Ind^{B^e}_{A^e}Y) = HH_n(B,Ind^{B^e}_{A^e}Y)
  \end{align*}
  and
  \begin{align*}
    HH_n(A,Res^{B^e}_{A^e}X) = \Tor^{A^e}_n(A,Res^{B^e}_{A^e}X)
    \cong \Tor^{B^e}_n & (Ind^{B^e}_{A^e}A,X)\\
    & \xra{\nu_X}  \Tor^{B^e}_n(B,X) = HH_n(B,X) 
  \end{align*}
  for every $X\in \lmod{B^e}$, $Y\in \lmod{A^e}$ and $n\geq 0$.  In the following
  subsections, we are going to show that $\xi$ viewed as a natural transformation
  of functors is an isomorphism when the extension is reduced flat, and $\nu$ again
  viewed as a natural transformation of functors is an isomorphism when the
  extension is (almost) smooth, both for a certain range of $n$.  Moreover, we are
  also going to show that when the extension is faithfully flat then the fact that
  $\nu$ is an isomorphism implies so is $\xi$.
\end{remark}

\subsection{Almost smooth extensions}\label{AlmostSmooth}

For an extension of $\Bbbk$-algebras $A\subseteq B$, we define $\Sigma_{B|A}$ to
be the kernel of the relative multiplication map
$Ind_{A^e}^{B^e}A = B\eotimes{A} B\to B$ as a morphism of $B^e$-modules.  We call a
flat extension \emph{(almost) smooth} if $\Sigma_{B|A}$ is a projective
(resp. flat) $B^e$-module~\cite{Schelter:SmoothAlgebras}.  Notice that when an
extension is smooth then it is also almost smooth.

\begin{proposition}\label{AlmostSmoothness}
  A flat extension $A\subseteq B$ is almost smooth if and only if we have
  $HH_n(B,X)\cong HH_n(A,Res_{A^e}^{B^e}X)$ for every $X$ and for every $n\geq 2$.
\end{proposition}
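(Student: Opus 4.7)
The plan is to feed the defining short exact sequence of $B^e$-modules
\[ 0\to \Sigma_{B|A}\to Ind^{B^e}_{A^e}A\to B\to 0 \]
into $\Tor^{B^e}_*(-,X)$ and read the equivalence off the resulting long exact sequence, after first identifying $\Tor^{B^e}_n(Ind^{B^e}_{A^e}A,X)\cong HH_n(A,Res^{B^e}_{A^e}X)$ in such a way that the natural map $\nu_X$ of the Remark becomes the map on $HH_*$ induced by $A\hookrightarrow B$.

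For that identification I would start with a projective $A^e$-resolution $P_\bullet\to A$. Since $B$ is flat over $A$ on both sides, the induction $Ind^{B^e}_{A^e}=B^e\eotimes{A^e}(-)$ is exact; being left adjoint to the exact restriction functor it also preserves projectives, so $Ind^{B^e}_{A^e}P_\bullet\to Ind^{B^e}_{A^e}A$ is a projective $B^e$-resolution. The tensor-hom adjunction yields
\[ Ind^{B^e}_{A^e}P_\bullet \eotimes{B^e} X \;\cong\; P_\bullet \eotimes{A^e} Res^{B^e}_{A^e}X, \]
and passing to homology gives the desired isomorphism together with the naturality claim about $\nu_X$.

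The forward implication is then immediate: if $\Sigma_{B|A}$ is $B^e$-flat, then $\Tor^{B^e}_n(\Sigma_{B|A},X)=0$ for every $n\geq 1$, so the long exact sequence collapses to an isomorphism $HH_n(A,Res^{B^e}_{A^e}X)\cong HH_n(B,X)$ in all degrees $n\geq 2$.

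For the converse, assume $\nu_X$ is an isomorphism for every $X$ and every $n\geq 2$. Splitting the long exact sequence at each $n\geq 2$, the iso forces both the connecting map $\Tor^{B^e}_{n+1}(B,X)\to \Tor^{B^e}_n(\Sigma_{B|A},X)$ and the map $\Tor^{B^e}_n(\Sigma_{B|A},X)\to \Tor^{B^e}_n(Ind^{B^e}_{A^e}A,X)$ to be zero for every $n\geq 2$, so by exactness $\Tor^{B^e}_n(\Sigma_{B|A},X)=0$ for $n\geq 2$. The main obstacle will be upgrading this to vanishing at $n=1$, which is what genuine $B^e$-flatness of $\Sigma_{B|A}$ demands: at this point the long exact sequence only supplies the embedding $\Tor^{B^e}_1(\Sigma_{B|A},X)\hookrightarrow \Tor^{B^e}_1(Ind^{B^e}_{A^e}A,X)\cong HH_1(A,Res^{B^e}_{A^e}X)$ whose image is $\ker(\nu_X)$. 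I would close this gap by applying the $n=2$ hypothesis to syzygies of arbitrary $X$ coming from a projective $B^e$-resolution and dimension-shifting, so that the already established higher-Tor vanishing of $\Sigma_{B|A}$ cascades down to degree one.
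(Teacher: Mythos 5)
Your setup --- inducing a projective $A^e$-resolution of $A$, using exactness of $Ind^{B^e}_{A^e}$ (from two-sided flatness of $B$ over $A$) together with the adjunction to identify $\Tor^{B^e}_n(Ind^{B^e}_{A^e}A,X)$ with $HH_n(A,Res^{B^e}_{A^e}X)$, and then feeding $0\to\Sigma_{B|A}\to Ind^{B^e}_{A^e}A\to B\to 0$ into the long exact sequence --- is exactly the content that the paper's one-line proof compresses into its displayed chain of isomorphisms, and your forward implication is correct and complete.

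The gap is in your converse, and the repair you sketch does not close it. Dimension shifting in the coefficient variable runs the wrong way for what you need: from $0\to\Omega X\to P\to X\to 0$ with $P$ projective you obtain $\Tor^{B^e}_2(\Sigma_{B|A},X)\cong\Tor^{B^e}_1(\Sigma_{B|A},\Omega X)$, so the degree-$\geq 2$ hypothesis only yields $\Tor^{B^e}_1(\Sigma_{B|A},Y)=0$ for those $Y$ that arise as first syzygies, i.e.\ as submodules of projective $B^e$-modules. Flatness of $\Sigma_{B|A}$ requires $\Tor^{B^e}_1(\Sigma_{B|A},B^e/I)=0$ for every finitely generated left ideal $I$, and the cyclic modules $B^e/I$ are not syzygies, so the ``cascade down to degree one'' never reaches the modules that actually test flatness. (Shifting in the first variable instead would require $Ind^{B^e}_{A^e}A$ to be $B^e$-flat, which amounts to $A$ being $A^e$-flat and fails in general; it is only in the absolute case $A=\Bbbk$, where $Ind^{B^e}_{A^e}A=B^e$ is free, that this descent works.) What your argument honestly delivers is $\Tor^{B^e}_n(\Sigma_{B|A},X)=0$ for all $n\geq 2$ and all $X$ --- flat dimension at most one --- together with $\Tor^{B^e}_1(\Sigma_{B|A},X)\cong\ker\nu_X$ in degree one; concluding flatness still requires injectivity of $\nu_X$ in degree $1$, which the stated hypothesis (isomorphisms only for $n\geq 2$) does not provide. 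Note that the paper's own proof does not supply this step either --- it simply asserts the equivalence --- so you have correctly isolated where the genuine difficulty of the ``only if'' direction lies, but the dimension-shifting fix as described would fail.
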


\begin{proof}
  The proof follows from the fact that $A\subseteq B$ is almost smooth
  if and only if we have a sequence of isomorphisms of the form
  \begin{align*}
    HH_n(A,Res^{B^e}_{A^e}X) = \Tor^{A^e}_n(A,Res^{B^e}_{A^e}X) \cong & \Tor^{B^e}_n(Ind_{A^e}^{B^e}A,X)\\
    \cong & \Tor^{B^e}_n(B,X) = HH_n(B,X) 
  \end{align*}
  for every $n\geq 2$ and $X\in\lmod{B^e}$.  
\end{proof}

\begin{remark}
  There is a version of Proposition~\ref{AlmostSmoothness} for smooth extensions
  that works with Hochschild cohomology instead of homology that says
  $A\subseteq B$ is smooth if and only if
  \[ HH^n(B,X) \cong HH^n(A,Res^{B^e}_{A^e} X) \] for every
  $X\in\lmod{B^e}$ and $n\geq 1$.
\end{remark}

\subsection{Reduced flat extensions}

We now recall from~\cite{Kaygun:JacobiZariski} that we call an extension
$A\subseteq B$ as \emph{reduced flat} when the cokernel $B/A$ of the $A$-bimodule
inclusion $A\to Res^{B^e}_{A^e}B$ is flat as a $A$-bimodule.  We also observe that
reduced flatness of the extension is equivalent to the fact that the Hochschild
homology of $H_n(A,X)$ of $A$ with coefficients in any $X\in\lmod{A^e}$, and the
torsion groups $\Tor^{A^e}_n(Res^{B^e}_{A^e}B,X)$ are isomorphic for all $n\geq 1$.
Combining this result with Lemma~\ref{PrimaryLemma} we get

\begin{proposition}\label{InductionEquivalence}
  A flat extension $A\subseteq B$ is reduced flat if and only if we have natural
  isomorphisms of the form $HH_n(A,X)\cong HH_n(B,Ind_{A^e}^{B^e}X)$ for every
  $X\in\lmod{A^e}$ and for every $n\geq 1$.
\end{proposition}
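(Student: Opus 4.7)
The plan is to assemble the proposition from two ingredients already in place: the characterization of reduced flatness recalled immediately above the statement, which says that $A\subseteq B$ is reduced flat precisely when the inclusion $A\hookrightarrow Res^{B^e}_{A^e}B$ induces isomorphisms $HH_n(A,X)\cong \Tor^{A^e}_n(Res^{B^e}_{A^e}B,X)$ for every $X$ and every $n\geq 1$; and Lemma~\ref{PrimaryLemma}, which identifies $\Tor^{A^e}_n(Res^{B^e}_{A^e}B,X)$ with $\Tor^{B^e}_n(B,Ind^{B^e}_{A^e}X)=HH_n(B,Ind^{B^e}_{A^e}X)$. Composing the two reproduces exactly the natural transformation $\xi_X$ from the Remark, so in principle the proposition is proved in one stroke, modulo verifying that the natural map in the statement is really $\xi$.

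For the forward implication I would start with the short exact sequence of $A$-bimodules $0\to A\to Res^{B^e}_{A^e}B\to B/A\to 0$ coming from the algebra inclusion and apply $\Tor^{A^e}(-,X)$. Reduced flatness of the extension kills $\Tor^{A^e}_n(B/A,X)$ in degrees $n\geq 1$, so the long exact sequence collapses and $A\hookrightarrow Res^{B^e}_{A^e}B$ induces an isomorphism on $\Tor^{A^e}_n(-,X)$ for every $n\geq 1$. Concatenating with Lemma~\ref{PrimaryLemma} yields the required isomorphism $HH_n(A,X)\cong HH_n(B,Ind^{B^e}_{A^e}X)$.

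For the converse I would run the same argument in reverse. Since Lemma~\ref{PrimaryLemma} only needs the flat extension hypothesis already in force, the assumed isomorphism $HH_n(A,X)\cong HH_n(B,Ind^{B^e}_{A^e}X)$ forces $A\hookrightarrow Res^{B^e}_{A^e}B$ to induce an isomorphism on $\Tor^{A^e}_n(-,X)$ in all degrees $n\geq 1$ and all $X\in\lmod{A^e}$. Feeding this back into the long exact sequence gives $\Tor^{A^e}_n(B/A,X)=0$ for every $n\geq 1$ and every $X$, which is precisely flatness of $B/A$ as an $A^e$-module, i.e.\ the extension is reduced flat. The only delicate point in the whole argument is the naturality check that the map whose isomorphism is asserted in the statement is indeed the composite $\xi$ built from the inclusion $A\to Res^{B^e}_{A^e}B$ and the identification from Lemma~\ref{PrimaryLemma}, since otherwise the converse direction would not pin down flatness of $B/A$; this compatibility is a routine diagram chase at the level of the bar complex already used in Lemma~\ref{PrimaryLemma}.
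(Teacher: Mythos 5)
Your argument is essentially the paper's own proof: the paper simply recalls (with a citation) that reduced flatness is equivalent to $A\hookrightarrow Res^{B^e}_{A^e}B$ inducing isomorphisms on $\Tor^{A^e}_n(-,X)$ for all $n\geq 1$ and all $X$, and then concatenates this with Lemma~\ref{PrimaryLemma}, exactly as you do. Your unwinding of the long exact sequence for $0\to A\to Res^{B^e}_{A^e}B\to B/A\to 0$ supplies the detail the paper delegates to the reference, and your closing remark that the isomorphism must be the natural map $\xi$ is indeed the one point that needs care in the converse --- note in particular that at $n=1$ the long exact sequence only identifies $\Tor^{A^e}_1(B/A,X)$ with the kernel of $A\otimes_{A^e}X\to Res^{B^e}_{A^e}B\otimes_{A^e}X$, so the degree-zero injectivity (which the cited characterization implicitly handles) still has to be accounted for.
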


We will say that an extension $A\subseteq B$ \emph{satisfies
  Hochschild-Jacobi-Zariski (resp. cyclic-Jacobi-Zariski)
  condition}~\cite[3.5.5.1]{Loday:CyclicHomology} if we have a long exact sequence
in Hochschild homology (resp. cyclic homology) of the form
\begin{equation}
  \label{HomologicalHJZ}
  HH_{n+1}(B|A,X)\to HH_n(A,Res^{B^e}_{A^e}X)\to HH_n(B,X)\to HH_n(B|A,X)
\end{equation}
for every $n\geq 1$, and for every $X\in\lmod{B^e}$.

\begin{proposition}[{\cite[Theorem 4.1 and Theorem 4.2]{Kaygun:JacobiZariski}}]\label{HJZ}
  If a flat extension $A\subseteq B$ is reduced flat then the extension satisfies
  both Hochschild-Jacobi-Zariski and cyclic-Jacobi-Zariski conditions for every
  $X\in\lmod{B^e}$.
\end{proposition}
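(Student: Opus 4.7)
The plan is to derive the Hochschild Jacobi-Zariski long exact sequence from the short exact sequence of $B^e$-modules
\[
0\to \Sigma_{B|A}\to B\eotimes{A}B\to B\to 0
\]
by applying $\Tor^{B^e}(-,X)$ for $X\in\lmod{B^e}$, and then pass to the cyclic case using Connes' periodicity. For the middle term, write $B\eotimes{A}B=Ind_{A^e}^{B^e}A$; since $Ind_{A^e}^{B^e}$ is exact by $A$-flatness of $B$ and sends $A^e$-free modules to $B^e$-free modules (because $Ind_{A^e}^{B^e}A^e=B^e$), a projective $A^e$-resolution $P_\bullet\to A$ lifts to a $B^e$-projective resolution $Ind_{A^e}^{B^e}P_\bullet\to B\eotimes{A}B$. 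Combined with the identification $(Ind_{A^e}^{B^e}P_n)\eotimes{B^e}X\cong P_n\eotimes{A^e}Res^{B^e}_{A^e}X$, this yields $\Tor^{B^e}_n(B\eotimes{A}B,X)\cong HH_n(A,Res^{B^e}_{A^e}X)$ for all $n\geq 0$. For the kernel term, the truncated relative bar complex $\cdots\to\CB_2(B|A)\to\CB_1(B|A)\twoheadrightarrow\Sigma_{B|A}$ provides a $B^e$-flat resolution of $\Sigma_{B|A}$ (where reduced flatness is what controls the $A^e$-flatness obstructions on the middle factors $B^{\eotimes{A}n}$), yielding $\Tor^{B^e}_n(\Sigma_{B|A},X)\cong HH_{n+1}(B|A,X)$ for $n\geq 1$ by a one-step index shift. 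A direct calculation shows that the connecting map $\Tor^{B^e}_0(\Sigma_{B|A},X)\to HH_0(A,Res^{B^e}_{A^e}X)$ is induced by the relative Hochschild boundary $d_1\colon\CH_1(B|A,X)\to\CH_0(B|A,X)$, so that its kernel is exactly $HH_1(B|A,X)$. Feeding these identifications into the $\Tor^{B^e}$ long exact sequence produces the HJZ sequence for every $n\geq 1$.

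For the cyclic case, I would lift the Hochschild comparison to the mixed-complex level: each of $\CH_*(A,Res^{B^e}_{A^e}X)$, $\CH_*(B,X)$, and $\CH_*(B|A,X)$ carries a compatible Connes cyclic operator, and the morphisms in the preceding paragraph respect this structure. Applying Connes' SBI long exact sequence to each mixed complex and chasing the resulting ladder with the five lemma propagates the Jacobi-Zariski sequence from Hochschild into cyclic homology.

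The main obstacle is the interplay between reduced flatness and the homological algebra of the relative bar complex: without reduced flatness, $\CB_n(B|A)$ need not be $B^e$-flat (because $B^{\eotimes{A}n}$ may fail to be $A^e$-flat even when $B$ is $A$-flat on both sides), which would invalidate the $\Tor$ computation for $\Sigma_{B|A}$. Proposition~\ref{InductionEquivalence} is the decisive input here, packaging reduced flatness as the Tor-equivalence $A\to Res^{B^e}_{A^e}B$ in positive degrees; this is exactly what bridges the absolute and relative computations and closes the Jacobi-Zariski sequence in the delicate low-degree range. The cyclic extension is then a formal consequence of Connes' naturality once the Hochschild case is in hand.
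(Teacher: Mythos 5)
The paper does not actually prove Proposition~\ref{HJZ}; it imports it from \cite{Kaygun:JacobiZariski}, where the argument is organized around the quotient cyclic module $\CH_*(B,X)/\CH_*(A,Res^{B^e}_{A^e}X)$ rather than around the kernel $\Sigma_{B|A}$. Your route --- the long exact sequence of $\Tor^{B^e}(-,X)$ applied to $0\to\Sigma_{B|A}\to B\eotimes{A}B\to B\to 0$ --- is a legitimate alternative skeleton, and your identification $\Tor^{B^e}_n(B\eotimes{A}B,X)\cong HH_n(A,Res^{B^e}_{A^e}X)$ is correct (it uses only one-sided $A$-flatness of $B$, not reduced flatness).

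The genuine gap is in your computation of $\Tor^{B^e}_n(\Sigma_{B|A},X)$. The truncated complex $\cdots\to\CB_2(B|A)\to\CB_1(B|A)\to\Sigma_{B|A}\to 0$ is \emph{not} a complex of flat $B^e$-modules under the stated hypotheses: $\CB_m(B|A)=Ind_{A^e}^{B^e}(B^{\eotimes{A}m})$ and $(Ind_{A^e}^{B^e}M)\eotimes{B^e}X\cong M\eotimes{A^e}Res^{B^e}_{A^e}X$, so $B^e$-flatness of $\CB_m(B|A)$ would require the vanishing of $\Tor^{A^e}_{>0}(B^{\eotimes{A}m},Res^{B^e}_{A^e}X)$. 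Reduced flatness makes $B/A$ flat over $A^e$, not $B$: indeed by Proposition~\ref{InductionEquivalence} one has $\Tor^{A^e}_n(B,Y)\cong HH_n(A,Y)$ for $n\geq 1$, which is generically nonzero. So your parenthetical claim that reduced flatness ``controls the $A^e$-flatness obstructions on the middle factors $B^{\eotimes{A}n}$'' is false as written, and the Tor computation for $\Sigma_{B|A}$ collapses. The repair is to pass to the \emph{normalized} relative bar complex, with terms $B\eotimes{A}(B/A)^{\eotimes{A}n}\eotimes{A}B$ in positive degrees: it is chain homotopy equivalent to $\CB_*(B|A)$ over $B^e$ (so it still computes $HH_*(B|A,X)$ and its truncation still resolves $\Sigma_{B|A}$), and its positive-degree terms are $B^e$-flat precisely because $B/A$ is $A^e$-flat, whence so is each $(B/A)^{\eotimes{A}n}$ (an $A^e$-flat module is one-sided $A$-flat, and the tensor product over $A$ of a one-sided flat module with an $A^e$-flat module is $A^e$-flat). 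This normalization step is the one place reduced flatness genuinely enters, and it is exactly the step your write-up omits. A second, smaller gap concerns the cyclic case: the connecting homomorphisms of a $\Tor^{B^e}$ long exact sequence are not morphisms of mixed complexes, and $\Sigma_{B|A}$ and $B\eotimes{A}B$ carry no cyclic structure, so ``decorating'' your Hochschild argument with Connes operators does not produce the cyclic-Jacobi-Zariski sequence. One needs an honest short exact sequence of cyclic modules, e.g.\ $0\to\CH_*(A,Res^{B^e}_{A^e}X)\to\CH_*(B,X)\to Q_*\to 0$ with $Q_*$ identified with $\CH_*(B|A,X)$ up to quasi-isomorphism in the relevant range --- which is the shape of the argument in \cite{Kaygun:JacobiZariski} --- before applying the SBI sequence and the five lemma.
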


\begin{remark}
  Recall that the relative homology groups $HH_n(B|A,X)$ measure the failure of
  the extension of being smooth since we have both the exact sequence by
  Proposition~\ref{AlmostSmoothness} and Proposition~\ref{HJZ} for a almost smooth
  reduced flat extensions $A\subseteq B$.  This is rather subtle: almost smoothness
  does imply relative homology vanishes since absolute $B^e$-flatness of
  $\Sigma_{B|A}$ implies that its relative $(B,A)$-flatness as a $B^e$-module.
  However, the converse need not be true in general.  The fact that the relative
  homology vanishes for $n\geq 2$ implies $\Sigma_{B|A}$ is only $B^e$-flat
  \emph{relative to} $A$. The fact that $B$ is reduced flat over $A$ gives
  us~\eqref{HomologicalHJZ}, and then we get the isomorphisms
  $HH_n(A,Res^{B^e}_{A^e}X)\to HH_n(B,X)$ for the required range, and then
  Proposition~\ref{AlmostSmoothness} gives us the absolute $B^e$-flatness.
\end{remark}

\subsection{Faithfully flat almost smooth extensions}

\begin{theorem}\label{FlatRelative}
  Assume $B$ is faithfully flat over $A$.  Then $B$ is reduced flat over $A$ if and
  only if $Res^{B^e}_{A^e}\Sigma_{B|A}$ is a flat $A$-bimodule.
\end{theorem}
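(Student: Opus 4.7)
The plan is to reduce the theorem to a descent statement via the natural $A$-bimodule isomorphism $\Sigma_{B|A}\cong B\eotimes{A}(B/A)$. To obtain it, we apply the exact functor $B\eotimes{A}-$ (exact because $B$ is left $A$-flat) to the $A$-bimodule short exact sequence $0\to A\to B\to B/A\to 0$ and compare with $0\to\Sigma_{B|A}\to B\eotimes{A}B\xra{\mu}B\to 0$. The map $b\mapsto b\otimes 1$ is a left-$B$-module section of $\mu$ whose image equals $B\eotimes{A}A\subset B\eotimes{A}B$, so the quotient map $B\eotimes{A}B\to B\eotimes{A}(B/A)$ restricts to an $A$-bimodule isomorphism $\Sigma_{B|A}\xrightarrow{\sim}B\eotimes{A}(B/A)$.

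Next, a direct verification on generators and relations yields the natural isomorphism $(B\eotimes{A}M)\eotimes{A^e}N\cong M\eotimes{A^e}(N\eotimes{A}B)$ for any $A$-bimodules $M,N$. This already shows that $B\eotimes{A}-$ preserves $A^e$-flatness, and combined with the exactness of $B\eotimes{A}-$ on $A$-bimodules it derives to $\Tor^{A^e}_n(B\eotimes{A}M,N)\cong\Tor^{A^e}_n(M,N\eotimes{A}B)$ for every $n\geq 0$. Specializing to $M=B/A$ gives $\Tor^{A^e}_n(\Sigma_{B|A},N)\cong\Tor^{A^e}_n(B/A,N\eotimes{A}B)$. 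The forward direction of the theorem is then immediate: if $B/A$ is $A^e$-flat then the right hand side vanishes for $n\geq 1$, hence $\Sigma_{B|A}$ is $A^e$-flat as well.

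For the backward direction, faithful flatness of $B$ over $A$ plays an essential role. Assuming $\Sigma_{B|A}$ is $A^e$-flat, we have $\Tor^{A^e}_n(B/A,N\eotimes{A}B)=0$ for all $n\geq 1$ and all $N$. Faithful flatness supplies the $A$-bimodule injection $N\hookrightarrow N\eotimes{A}B$, $n\mapsto n\otimes 1$, whose cokernel is $N\eotimes{A}(B/A)$; feeding the resulting short exact sequence into the long exact sequence of $\Tor^{A^e}_*(B/A,-)$ and using the assumed vanishing of the middle terms yields natural shift isomorphisms $\Tor^{A^e}_{n+1}(B/A,N\eotimes{A}(B/A))\cong\Tor^{A^e}_n(B/A,N)$ for $n\geq 1$.

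The main obstacle will be to close this self-referential chain to extract actual vanishing, since the shift isomorphisms alone do not suffice. A clean closure should come through the faithfully flat descent $A^e\hookrightarrow B^e$, which is itself faithfully flat under our bilateral hypothesis: this reformulates $A^e$-flatness of $B/A$ and $\Sigma_{B|A}$ as $B^e$-flatness of the inductions $B\eotimes{A}(B/A)\eotimes{A}B$ and $B\eotimes{A}\Sigma_{B|A}\eotimes{A}B$ respectively, and may be combined with the $B^e$-module structure on $\Sigma_{B|A}$ inherited from the relative bar complex $\CB_*(B|A)$ together with Lemma~\ref{PrimaryLemma} to produce the missing link.
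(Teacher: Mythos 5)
Your reduction mirrors the paper's own proof for its first half, and that half is correct. The identification $\Sigma_{B|A}\cong B\otimes_A(B/A)$ (the paper obtains the mirror image $(B/A)\otimes_A B$ from the section $b\mapsto 1\otimes b$ and the Snake Lemma) and the cyclic rotation $(B\otimes_A M)\otimes_{A^e}N\cong M\otimes_{A^e}(N\otimes_A B)$ are exactly the two ingredients of the published argument, and your deduction of the implication ``$B/A$ flat over $A^e$ $\Rightarrow$ $\Sigma_{B|A}$ flat over $A^e$'' from them is sound (and, as you implicitly observe, uses only flatness of $B$ over $A$, not faithful flatness).

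The converse implication, however, is not proved. You derive the shift isomorphisms $\Tor^{A^e}_{n+1}(B/A,\,N\otimes_A(B/A))\cong\Tor^{A^e}_n(B/A,\,N)$ and then declare that closing ``this self-referential chain'' is the main obstacle, offering only a gesture toward descent along $A^e\to B^e$. But that closure is precisely the content of the implication --- it is the one place where faithful flatness must actually do work --- and no argument is supplied. Iterating your shift only yields $\Tor^{A^e}_{n+k}(B/A,\,(B/A)^{\otimes_A k}\otimes_A N)\cong\Tor^{A^e}_n(B/A,\,N)$, which gives nothing absent a finiteness hypothesis on homological dimension; and the descent you sketch would convert $A^e$-flatness of $B/A$ into $B^e$-flatness of $Ind_{A^e}^{B^e}(B/A)$, which is not visibly the module $\Sigma_{B|A}$ whose $A^e$-flatness you are assuming, so the ``missing link'' is not a routine verification. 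For comparison, the paper treats both directions at once by reading the rotation identity as an isomorphism of \emph{functors}, $(-)\otimes_{A^e}\Sigma_{B|A}\cong(B/A)\otimes_{A^e}\bigl(B\otimes_A(-)\bigr)$, so that $A^e$-flatness of $\Sigma_{B|A}$ becomes exactness of the composite of $(B/A)\otimes_{A^e}(-)$ with the exact faithfully flat base change $B\otimes_A(-)$, and then uses faithful flatness to pass from exactness of that composite back to exactness of $(B/A)\otimes_{A^e}(-)$. In whatever form you choose to justify that last descent step, it is the step your proposal lacks; as written you have established only one of the two implications of the theorem.
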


\begin{proof}
  We start by observing that there is a natural isomorphism of $A^e$-modules of the
  form
  $Res^{B^e}_{A^e}\Sigma_{B|A} \cong (Res^{B^e}_{A^e}B/A) \eotimes{A}
  Res^{B^e}_{A^e} B$ coming from the diagram
  \[\xymatrix{
    &  & A\eotimes{A} Res^{B^e}_{A^e} B \ar[r]^{\cong} \ar[d] & Res^{B^e}_{A^e} B \ar@{=}[d] & \\
    0\ar[r] & Res^{B^e}_{A^e}\Sigma_{B|A}\ar[r] & Res^{B^e}_{A^e} (B\eotimes{A} B) \ar[r] & Res^{B^e}_{A^e} B \ar[r] & 0
  }\]
  using the Snake's Lemma.  Let us drop the use of $Res^{B^e}_{A^e}$ to simplify the
  notation.  Then we see that the functor
  $(\ \cdot\ )\eotimes{A^e} (B/A\eotimes{A} B)$ is exact if and only if
  \[ (\ \cdot\ )\eotimes{A^e} (B/A\eotimes{A} B) \cong
    B/A\eotimes{A^e}(B\eotimes{A} \ \cdot\ ) \] is exact.  Since we assumed $B$ is
  faithfully flat over $A$, the flatness of $B/A$ of $A^e$-module is equivalent to
  the flatness of $\Sigma_{B|A}$ as a $A^e$-module.
\end{proof}

\begin{remark}
  One should think of Theorem~\ref{FlatRelative} as a \emph{faithfully flat descent}
  result because the fact that an extension $A\subseteq B$ is reduced flat is equivalent
  to the fact that the Amitsur complex
  \[ A\to B \to B\eotimes{A}B\to B\eotimes{A}B\eotimes{A}B \to \cdots \] is
  exact~\cite{RosenbergZelinsky:AmitsurComplex}, or that the cobar complex of the
  Sweedler coring~\cite[Chapter 4, Section 25]{BrzezinskiWisbauer:ComodulesCorings}
  is contractible.  In fact, the \emph{descent data} for an
  extension~\cite{KnusOjanguren:Descent, Grothendieck:DescentI} is a specific
  prescription for a contracting homotopy for the Amitsur complex.
\end{remark}

\begin{theorem}\label{AlmostSmoothBegetsReducedFlat}
  Every faithfully flat almost smooth extension $A\subseteq B$ is reduced
  flat. Then there are isomorphisms in Hochschild homology $HH_n(A)\cong HH_n(B)$
  for every $n\geq 2$, and in cyclic homology $HC_{n+2}(B|A)\cong HC_n(B|A)$ for
  every $n\geq 1$.
\end{theorem}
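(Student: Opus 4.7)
The plan is to derive reduced flatness from a transitivity-of-flatness argument plugged into Theorem~\ref{FlatRelative}, and then to extract the two homological consequences by feeding this into Propositions~\ref{InductionEquivalence}, \ref{AlmostSmoothness} and~\ref{HJZ}.

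For the first claim, I would invoke Theorem~\ref{FlatRelative}. Because $B$ is faithfully flat over $A$, it suffices to show that $Res^{B^e}_{A^e}\Sigma_{B|A}$ is a flat $A^e$-module. Almost smoothness provides $B^e$-flatness of $\Sigma_{B|A}$, and the two-sided flatness of $B$ over $A$ makes $B^e = B\otimes B^{op}$ flat as an $A^e = A\otimes A^{op}$-module. Composing these flat functors then gives the required $A^e$-flatness of $\Sigma_{B|A}$, so Theorem~\ref{FlatRelative} applies and yields reduced flatness.

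With reduced flatness in hand, I would use Proposition~\ref{InductionEquivalence} with $X=A$ to obtain $HH_n(A)\cong HH_n(B,\,B\eotimes{A} B)$ for every $n\geq 1$. To pass from $B\eotimes{A} B$ to $B$ in the second argument, I would plug the short exact sequence
\[ 0\to \Sigma_{B|A}\to B\eotimes{A} B\to B\to 0 \]
into the long exact sequence of $\Tor^{B^e}(B,-)$. The $B^e$-flatness of $\Sigma_{B|A}$ kills $\Tor^{B^e}_m(B,\Sigma_{B|A})$ for $m\geq 1$, so the connecting maps collapse and the quotient map induces $HH_n(B,\,B\eotimes{A} B)\cong HH_n(B)$ for $n\geq 2$. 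Chaining the two isomorphisms produces $HH_n(A)\cong HH_n(B)$ for $n\geq 2$.

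For the cyclic periodicity, I would first establish that $HH_n(B|A,X)=0$ for every $X\in\lmod{B^e}$ and every $n\geq 2$. As the remark following Proposition~\ref{HJZ} records, the absolute $B^e$-flatness of $\Sigma_{B|A}$ specialises to relative $(B,A)$-flatness; splicing the truncated relative bar resolution $\cdots\to \CB_1(B|A)\to \Sigma_{B|A}\to 0$ with a dimension-shift argument then forces the relative Tor groups $\Tor^{B^e/A^e}_n(B,X)=HH_n(B|A,X)$ to vanish in degrees $n\geq 2$. Specialising to $X=B$ yields $HH_n(B|A)=0$ for $n\geq 2$, and Connes' SBI long exact sequence then sandwiches the periodicity operator $S\colon HC_{n+2}(B|A)\to HC_n(B|A)$ between two vanishing Hochschild terms for $n\geq 1$, making it an isomorphism in that range. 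The most delicate step, I expect, is precisely this vanishing in degree two: the HJZ sequence combined with Proposition~\ref{AlmostSmoothness} alone only yields $HH_n(B|A,X)=0$ for $n\geq 3$, and one genuinely needs the relative-flatness upgrade of almost smoothness to close the gap at $n=2$.
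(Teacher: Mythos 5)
Your proposal is correct and follows essentially the same route as the paper: reduced flatness via Theorem~\ref{FlatRelative} by showing $\Sigma_{B|A}$ is $A^e$-flat, the isomorphisms $HH_n(A)\cong HH_n(B)$ by chaining the reduced-flat and almost-smooth equivalences through $Ind_{A^e}^{B^e}A=B\eotimes{A}B$, and the cyclic periodicity from $HH_n(B|A)=0$ for $n\geq 2$ fed into the SBI sequence. The only cosmetic differences are that the paper deduces $A^e$-flatness of $\Sigma_{B|A}$ via Lazard's theorem (filtered colimits of free $B^e$-modules) where you invoke transitivity of flatness along the flat map $A^e\to B^e$, and that you spell out the relative-flatness dimension shift needed to get vanishing at $n=2$, which the paper records only in the remark following Proposition~\ref{HJZ}.
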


\begin{proof}
  Let us first prove that $B$ is reduced flat over $A$ when the extension is almost
  smooth.  Since $A\subseteq B$ is faithfully flat almost smooth, we have that
  $\Sigma_{B|A}$ is $B^e$-flat.  By Lazard's Theorem~\cite{Lazard:Flatness},
  $\Sigma_{B|A}$ is a flat $B$-bimodule if and only if it is a filtered colimit of free
  $B$-bimodules.  But free $B$-bimodules are a subclass of flat $A$-bimodules since our
  extension $A\subseteq B$ is flat, and every filtered colimit of flat $A$-bimodules is
  also flat.  Thus $Res^{B^e}_{A^e}\Sigma_{B|A}$ is a flat $A$-bimodule since it was a
  flat $B^e$-module.  Since $B$ is faithfully flat over $A$, this is equivalent to $B$
  being reduced flat over $A$ by Theorem~\ref{FlatRelative}. For the second assertion we
  observe that we have a sequence of isomorphisms
  \[ HH_n(A) \xra{\cong} HH_n(A,Res^{B^e}_{A^e}B) \cong
    HH_n(B,Ind_{A^e}^{B^e}A) \xra{\cong} HH_n(B)
  \]
  which proves we have the desired isomorphisms for $n\geq 2$.  The last assertion
  follows from Connes' SBI-sequence and the fact that $HH_n(B|A)$ is trivial for
  $n\geq 2$. See~\cite[Chapter 3]{Connes:Book} or ~\cite[Chapter II, Section
  2.2]{Loday:CyclicHomology}.
\end{proof}

\begin{example}
  Let $A$ be an algebra with Hochschild homological dimension 0. Such algebras are also
  known as \emph{amenable}~\cite{Johnson:CohomologyInBanachAlgebras} in the continuous
  Hochschild homology context.  In that case every $A^e$-module is also $A^e$-flat,
  i.e. all extensions of $A$ are almost smooth.  The typical examples are field extensions
  $k\subseteq A$, group algebras $k[G]$ over a finite group $G$, or algebra of functions
  $k(G)$ on a compact group $G$.  Then $\Sigma_{B|A}$ is automatically $A^e$-flat, and as
  long as $B$ is faithfully flat over $A$ the extension is also reduced flat by
  Corollary~\ref{AlmostSmoothBegetsReducedFlat}.  So, all faithfully flat extensions over
  amenable algebras are reduced flat.
\end{example}

\section{Fibrations of Algebras}\label{sect:Fibrations}

\subsection{Transpositions, distributive laws and fibrations}\label{Galois}

Let $C$ be a unital associative $\Bbbk$-algebra and let $B$ be an ordinary
$\Bbbk$-vector space.  A morphism of $\Bbbk$-vector spaces
$\omega\colon C\otimes B\to B\otimes C$ is called a \emph{left
  transposition}~\cite{Kaygun:UniversalHopfCyclicTheory} if the following diagram
is commutative
\begin{equation}
  \xymatrix{
    C\otimes C\otimes B \ar[r]^{C\otimes\omega}\ar[d]_{\mu_C\otimes B}
    & C\otimes B\otimes C \ar[r]^{\omega\otimes C}
    & B\otimes C\otimes C \ar[d]^{B\otimes \mu_C}\\
    C\otimes B \ar[rr]^\omega & &   B\otimes C
  }
  \qquad
  \xymatrix{
    & B\ar[dr]^{B\otimes 1}\ar[dl]_{1\otimes B}\\
    C\otimes B \ar[rr]^{\omega} & & B\otimes C
  }
\end{equation}
Right transpositions are defined similarly so that the inverse of a left
transposition, should it exist, would be a right transposition, and vice versa.

Now, assume $B$ and $C$ are two unital associative $\Bbbk$-algebras.  A morphism
of $\Bbbk$-vector spaces $\bowtie\colon C\otimes B\to B\otimes C$ is called a
\emph{a distributive law}~\cite{Beck:DistributiveLaws} if $\bowtie$ is a left
transposition with respect to $C$ and a right transposition with respect to $B$.

One can easily show that $\bowtie$ is a distributive law if and only if
$(\mu_B\otimes\mu_C)(B\otimes\bowtie\otimes C)$ is an associative unital product on
$B\otimes C$ whose unit is $1_B\otimes 1_C$.  We are going to use $B\bowtie C$ to denote
this algebra.  The triple $(B,C,\bowtie)$ is also called \emph{a matched pair of algebras}
and also \emph{twisted tensor product of algebras}~\cite{CapEtAl:TwistedTensorProduct}.

A right transposition $\omega\colon C\otimes B\to B\otimes C$ is called
\emph{invariant} if $B$ has a set of algebra generators $X$ such that for every
$x\in X$ and $c\in C$ there is another $c'\in C$ such that
$\omega(c\otimes x) = x\otimes c'$.  Invariant left transpositions are defined
similarly.

Let $\omega\colon C\otimes B\to B\otimes C$ be a right transposition.  A subalgebra
$A\subseteq B$ is called \emph{$C$-invariant} if the transposition $\omega$
restricts to an invariant transposition on $A$.  The largest subalgebra of $B$
which is $C$-invariant with respect to a transposition is denoted by $B^C$.

An extension of algebras $A\subseteq B$ is called \emph{a fibration} with fibres in
an algebra $C$ if
\begin{enumerate}[(i)]
\item there is an distributive law
  $\bowtie\colon C\otimes B\to B\otimes C$ with $A\subseteq B^C$, and
\item there is an epimorphism of $B^e$-modules
  $can\colon B\eotimes{A} B \xra{} B\bowtie C$.
\end{enumerate}
We call a fibration \emph{unramified} (resp. étale) when $\bowtie$ is invertible
(resp. injective.)  We call a fibration \emph{smooth} if the kernel of $can$ is
$B^e$-projective.  A fibration is called \emph{separable} if $can$ is a split epimorphism
of $B^e$-modules.  We call a fibration $A\subseteq B$ with fibres in $C$ as \emph{a Galois
  fibration} when the canonical map $can$ is an isomorphism of $B^e$-modules.

We would like to emphasize that any fibration $A\subseteq B$ with fibres in $C$ (be it unramified, étale, separable, or smooth) presupposes a distributive law $\bowtie\colon C\otimes B\to B\otimes C$ with $A\subseteq B^C$, and an epimorphism of $B^e$-modules $can\colon B\eotimes{A} B \xra{} B\bowtie C$.

\subsection{Examples}

\begin{example}
  Let $H$ be a Hopf algebra and let $B$ be a $H$-comodule algebra. In other words, $H$
  coacts on $B$ via a coaction $\lambda\colon B\to H\otimes B$  such that
  \[ (ab)_{(-1)}\otimes (ab)_{(0)} = a_{(-1)}b_{(-1)}\otimes a_{(0)}b_{(0)} \] for every
  $a,b\in A$ where we use the notation
  \[ \lambda(b) = b_{(-1)}\otimes b_{(0)} \] for every $b\in B$.  Let $A:=B^H$ where
  \[ B^H = \{b\in B\mid \lambda(b) = b\otimes 1_H \} \] Then there is an invertible
  distributive law $\bowtie \colon H\otimes B\to B\otimes H$ by
  \[ \bowtie(h\otimes b) = b_{(0)}\otimes hb_{(-1)} \] for every $h\in H$ and $b\in B$ and
  $A\subset B^H$ with respect to this distributive law and there is an $B^e$-bimodule
  isomorphism $can\colon B\otimes_A B\to B\otimes H$ when $A\subseteq B$ is a Hopf-Galois
  extension over $H$~\cite{Schneider:PrincipalHomogeneousSpaces}. 
\end{example}

\begin{example}
  Let $L$ be an algebra, and $B$ be an Ore extension $L[X,\alpha,\delta]$ over $L$ whose
  derivation part is trivial, i.e. $\delta=0$.  Let us set $B = L[X,\alpha,0]$ and
  $A=k[X]$, and we consider the extension $A\subseteq B$.  Notice that since $B$ has a
  basis in monomials of the form $u X^n$ where $u\in L$ and $n\in\B{N}$, we have a
  $k$-vector space isomorphism $B\cong L\otimes A$.  Then we see that we get a Galois
  fibration $A\subseteq B$ with fibres in $L$ since there is a natural isomorphism of
  $B$-bimodules of the form
  \[ B\otimes_A B \cong L\otimes B \] with the following invertible distributive law
  $\bowtie \colon B\otimes L\to L\otimes B$
  \[ \bowtie(uX^m\otimes v) = u\alpha^m(v)\otimes X^n \] for every $u,v\in L$ and monomial
  $uX^n\in B$.  Notice that $B^L = A$ and that $B\cong L\bowtie A$. Thus every Ore
  extension of the form $L[X,\alpha,0]$ is a Galois fibration $A\subseteq L[X,\alpha,0]$
  with fibres in $L$.

  When we consider the relative multiplication map $B\otimes_A B\to B$ we see that it
  reduces to $\mu_L\otimes id_A\colon L\otimes L\otimes A\to L\otimes A$ the multiplication
  map on $L$ tensored with the identity on $A$.  So, the kernel of the relative multiplication map 
  $\Sigma_{B|A}$ is isomorphic $\Sigma_{L|k}\otimes A$.  Since $A$ is invariant, as a
  $A$-bimodule this is just direct sum of $\dim_k\Sigma_{L|k}$-many copies of $A$.  Since
  $B$ is already faithfully flat over $A$, in order for this extension to be reduced flat
  we need $A$ to be $A^e$-flat as well, which we know is not the case.  In other words,
  Ore extensions are Galois fibrations but are not reduced flat extensions.
\end{example}

\begin{example}
  Let $G$ be a group acting on an algebra $L$ via algebra automorphisms
  $\triangleright \colon k[G]\otimes L\to L$.  We define a distributive law
  $\bowtie \colon k[G]\otimes L\to L\otimes k[G]$ by
  \[ \bowtie(g\otimes u) = g\triangleright u\otimes g \] for every $g\in G$ and $u\in L$.
  We let $B:= L\bowtie k[G]$ and $A=k[G]$.  As in the case with Ore extension,
  $B\cong L\otimes A$ as $k$-vector spaces and
  \[ B\otimes_A B \cong L\otimes B \] and with a distributive law
  $\bowtie\colon B\otimes L\to L\otimes B$ defined as
  \[ \bowtie(ug\otimes v) = u(g\triangleright v)\otimes g \] for every $ug\in B$ and
  $v\in L$.  We see that $B^L = A$ and we have a Galois fibration $A\subseteq B$ with
  fibres in $L$.  One can think of this as the more general version of the Ore extension
  we gave above where $G=\B{N}$.

  As for reduced flatness, the same argument above that worked for Ore extensions works
  here too: the kernel of the relative multiplication map
  $\Sigma_{B|A}$ is isomorphic to
  $\Sigma_{L|k}\otimes A$ which is again $\dim_k\Sigma_{L|k}$ copies of $A$.  So, since
  $B$ is already faithfully flat over $A$, for this extension to be reduced flat we need
  $A$ to be $A^e$-flat.  Since $A$ is the group algebra $k[G]$, this means the group has
  to have trivial homology.  In this case any finite group $G$ would work since we assume
  $char(k)$ is $0$.

  In short, all extensions of the form $k[G]\subseteq L\bowtie k[G]$ are reduced flat
  Galois fibrations if $G$ is finite and $char(k)=0$.
\end{example}

\subsection{Relationships}\label{Relations}

Assume $A\subseteq B$ is a flat extension of algebras.  Let us depict the
relationships between various objects we defined in this paper as follows:
\begin{equation}
  \xymatrix{
    & \pair{Galois}{Fibration}\ar@{=>}[d]\\
    \pair{Separable}{Fibration} \ar@{=>}[r] & \pair{Smooth}{Fibration} \ar@{=>}[r]
    & \pair{Almost Smooth}{Fibration} \\
    \pair{Separable}{Extension} \ar@{=>}[u]^{\text{central}} \ar@{=>}[r] 
    & \pair{Smooth}{Extension} \ar@{=>}[u]^{\text{central}} \ar@{=>}[r] 
    & \pair{Almost Smooth}{Extension} \ar@{=>}[u]_{\text{central}}\ar@{=>}[rr]^{\pair{faithfully}{flat}}
    & & \pair{Reduced Flat}{Extension}
  }
\end{equation}

Let us define $\Sigma_{B|A}$ as $ker(Ind^{B^e}_{A^e}A\to A)$, and $\mho^{B|A}$ as
$coker(A\to Res^{B^e}_{A^e}B)$, and consider the short exact sequences
\[ 0 \to \Sigma_{B|A}\to Ind_{A^e}^{B^e}(A) \to B \to 0 
   \quad\text{ and }\quad
   0 \to A \to Res^{B^e}_{A^e}(B)\to \mho^{B|A} \to 0 
\]
So, we call an embedding $A\subseteq B$ of unital associative algebras as a
\begin{enumerate}[(a)]
\item \emph{Reduced flat extension} when $\mho^{B|A}$ is a flat $A$-bimodule.
  Reduced flatness is equivalent to $Res^{B^e}_{A^e}\Sigma_{B|A}$ being a flat
  $A$-bimodule by Theorem~\ref{FlatRelative} when $B$ is faithfully flat over
  $A$.
\item \emph{Smooth extension} when $\Sigma_{B|A}$ is a projective $B$-bimodule.
  If in addition $A$ is central in $B$, by choosing the trivial distributive law
  $B\otimes \Bbbk\to \Bbbk\otimes B$ we can form a smooth fibration.
\item \emph{Almost smooth extension} when $\Sigma_{B|A}$ is a flat $B$-bimodule.
  Notice that when $\Sigma_{B|A}$ is projective, then it is also flat which means
  smoothness always implies almost smoothness.  Since every finitely presented flat
  $B^e$-module is projective, in cases where $B^e$ is noetherian almost smoothness
  and smoothness agree.
\item \emph{Smooth fibration} if the kernel of the structure map $can$ is
  $B^e$-projective, and \emph{almost smooth} if the kernel is flat as a
  $B^e$-module as expected.
\item \emph{Separable extension}~\cite[1.2.12]{Loday:CyclicHomology} when $B$ is a
  projective $B^e$-module relative to $A$ which implies $\Sigma_{B|A}$ is
  projective relative to $A$.  This means the extension is smooth relative to $A$.
\item \emph{Separable fibration} when $B\bowtie C$ is a projective $B^e$-module
  relative to $A$.  All separable extensions are separable fibrations over base
  algebra $A$.
\item \emph{Galois fibration} if the structure map $can$ is an isomorphism.  Then
  one also gets that it is a smooth fibration for free.  All Hopf-Galois extensions
  yield Galois fibrations by
  definition~\cite{Schneider:PrincipalHomogeneousSpaces}.  In particular, all
  Hopf-Galois extensions are unramified Galois fibrations.
\end{enumerate}

\subsection{Homology of Galois fibrations}\label{subsect:Engine}

\begin{theorem}\label{MainHJZ}
  Assume $A\subseteq B$ is a Galois fibration with fibres in $C$.  Then we have
  natural isomorphisms in Hochschild homology and cyclic (co)homology of the form
  (written here only for Hochschild homology)
  $$ HH_n(B|A,X)  \cong HH_n(C,X/[X,A]) $$
  for every $X\in\lmod{B^e}$ and for all $n\geq 0$.
\end{theorem}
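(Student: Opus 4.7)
The plan is to use the Galois isomorphism $can\colon B\eotimes{A} B\to B\bowtie C$ iteratively to reduce the relative bar resolution $\CB_*(B|A)$ to a much smaller $B^e$-resolution of $B$ built out of $C$, and then identify the resulting chain complex with the Hochschild complex of $C$ with coefficients in $X/[X,A]$.

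First, applying $can$ to adjacent pairs of $B$-factors in $\CB_n(B|A)=B^{\otimes_A(n+2)}$ and using the distributive law $\bowtie$ to commute $C$-factors past $B$-factors whenever necessary, one obtains a natural isomorphism of $B^e$-modules
\[ \CB_n(B|A) \cong B\otimes C^{\otimes(n+1)}, \]
where the left $B$-action is by multiplication on the leftmost factor and the right $B$-action is transported from $\CB_n(B|A)$ via the iterated $can$; this transported right action is twisted through $\bowtie$. The face maps $d_i$ of $\CB_n(B|A)$ translate under this identification into maps that multiply adjacent $C$-factors in the middle positions and act on the outer factors.

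Next, tensoring over $B^e$ with $X$ absorbs the leftmost $B$ into the bimodule action on $X$. The invariance condition $A\subseteq B^C$ is crucial here: it forces the twisted right action of $A\subseteq B$ on $B\otimes C^{\otimes(n+1)}$ to degenerate to ordinary right multiplication on the $B$-factor with trivial effect on the $C$-factors, so the $B^e$-coinvariants identify as
\[ \CB_n(B|A)\eotimes{B^e} X \cong C^{\otimes n}\otimes (X/[X,A]), \]
where one of the $n+1$ copies of $C$ is absorbed into a canonical $C$-bimodule structure on $X/[X,A]$ coming from the translation map $\tau\colon C\to B\eotimes{A} B$, $\tau(c) = c^{[1]}\otimes c^{[2]}$, defined by $can\bigl(\tau(c)\bigr) = 1\otimes c$. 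A direct check shows that under this identification the transported relative Hochschild differentials become precisely those of $\CH_*(C,X/[X,A])$, hence
\[ HH_n(B|A, X) \cong HH_n(C, X/[X, A]) \]
for every $n\geq 0$. The same chain-level identification intertwines Connes' cyclic operator (which acts by cyclic permutation of bar-factors on both sides), so the isomorphism promotes to one of mixed complexes, yielding the claimed isomorphisms in both cyclic homology and cyclic cohomology.

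The main obstacle is the second step: tracking the twisted $B^e$-action on $B\otimes C^{\otimes(n+1)}$ through the coinvariants, and verifying that the invariance $A\subseteq B^C$ produces the quotient by $[X,A]$ rather than the stronger $[X,B]$. It is precisely the invariance that makes the redundant $C$-factor collapse into the coefficient module $X/[X,A]$ via $\tau$, rather than persisting as an extra tensor factor; this is also what distinguishes Galois fibrations from generic extensions and makes the computation of the fibre work out correctly.
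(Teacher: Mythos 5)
Your proposal follows essentially the same route as the paper: iterating the Galois isomorphism $can$ to rewrite the relative bar complex $\CB_*(B|A)$ in terms of tensor powers of $C$, using the invariance $A\subseteq B^C$ to identify the $B^e$-coinvariants with $X/[X,A]\otimes C^{\otimes n}$, and matching the resulting differentials (and cyclic operator) with those of $\CH_*(C,X/[X,A])$. In fact you spell out several points the paper leaves implicit, notably the $C$-bimodule structure on $X/[X,A]$ via the translation map and the bookkeeping of the twisted right action.
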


\begin{proof}
  In order to calculate $HH_n(B|A,X)$ we are going to use
  $X\eotimes{B^e}\CB_*(B|A)$:
  \begin{align*}
    \CH_n(B|A,X)
    \cong & A\eotimes{A^e}(X\eotimes{A}\underbrace{B\eotimes{A}\cdots\eotimes{A}B}_{\text{$n$-times}})\\
    \cong & A\eotimes{A^e}(X\eotimes{B}\underbrace{B\eotimes{A}\cdots\eotimes{A}B}_{\text{$n+1$-times}})\\
    \cong & A\eotimes{A^e} (X\otimes\underbrace{C\otimes\cdots\otimes C}_{\text{$n$-times}})\\
    \cong & X/[X,A]\otimes (\underbrace{C\otimes\cdots\otimes C}_{\text{$n$-times}})\\
    \cong & (X/[X,A])\eotimes{C^e}\CB_*(C) 
  \end{align*}
  The result follows.
\end{proof}

\begin{theorem}\label{MainCJZ}
  Assume $A\subseteq B$ is a reduced flat Galois fibration with fibres in $C$.
  Then there are long exact sequences in Hochschild homology
  \begin{equation}
    \label{CyclicJZ}
    \cdots \to  B/[B,A]\otimes HH_{n+1}(C)\to HH_n(A) \to HH_n(B)\to B/[B,A]\otimes HH_n(C)\to \cdots 
  \end{equation} 
  for $n\geq 1$.
\end{theorem}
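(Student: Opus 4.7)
The plan is to feed the Hochschild-Jacobi-Zariski long exact sequence of Proposition~\ref{HJZ} the single coefficient module $X=B$, and then identify each of the three kinds of terms appearing in it.

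First, since $A\subseteq B$ is reduced flat, Proposition~\ref{HJZ} applied with $X=B$ produces the exact sequence
\[
\cdots \to HH_{n+1}(B|A,B) \to HH_n(A,\,Res^{B^e}_{A^e}B) \to HH_n(B) \to HH_n(B|A,B) \to \cdots
\]
valid for every $n\geq 1$. The strategy is to simplify the leftmost term using reduced flatness and the rightmost term using Theorem~\ref{MainHJZ}.

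For the relative term, I plug $X=B$ into Theorem~\ref{MainHJZ}. Tracing through the chain of isomorphisms in its proof, the factor $X/[X,A]=B/[B,A]$ appears as an inert tensor factor over $\Bbbk$ while the remainder of the complex is exactly $\CB_*(C)\eotimes{C^e}C$, yielding
\[
HH_n(B|A,B) \;\cong\; B/[B,A]\otimes HH_n(C).
\]
For the middle term, I use reduced flatness of the extension. The short exact sequence of $A$-bimodules
\[
0 \to A \to Res^{B^e}_{A^e}B \to \mho^{B|A} \to 0,
\]
combined with the $A^e$-flatness of $\mho^{B|A}$, yields $\Tor^{A^e}_n(A,\mho^{B|A})=0$ for $n\geq 1$; consequently the natural map $HH_n(A) \to HH_n(A,Res^{B^e}_{A^e}B)$ induced by the inclusion $A\hookrightarrow Res^{B^e}_{A^e}B$ is an isomorphism for all $n\geq 1$. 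Substituting these two identifications into the HJZ sequence produces exactly the sequence~\eqref{CyclicJZ}.

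The only real thing to check is that the identifications are compatible with the connecting maps of the HJZ sequence: but since every identification above is induced by a natural morphism of functors in the coefficient argument ($A \to Res^{B^e}_{A^e}B$ on the middle term, and the chain of isomorphisms of Theorem~\ref{MainHJZ} on the relative term), this compatibility is automatic and no extra work is required.
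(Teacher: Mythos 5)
Your proof is correct and follows essentially the same route as the paper: the Jacobi--Zariski sequence for the reduced flat extension combined with the identification $\CH_*(B|A)\cong B/[B,A]\otimes\CH_*(C)$ from Theorem~\ref{MainHJZ}. The only cosmetic difference is that the paper cites the absolute sequence~\eqref{JZ} directly from \cite[Theorem 4.2]{Kaygun:JacobiZariski}, whereas you rederive its middle term from the coefficient version~\eqref{HomologicalHJZ} via the vanishing of $\Tor^{A^e}_{\geq 1}(A,\mho^{B|A})$ --- which is exactly the mechanism underlying that citation.
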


\begin{proof}
  Since the extension is reduced flat, it follows from \cite[Theorem
  4.2]{Kaygun:JacobiZariski} that we have the long exact sequence~\ref{JZ} for
  $n\geq 1$.  We also have that
  $$\CH_*(B|A)\cong B/[B,A]\otimes \CH_*(C)$$
  by Theorem~\ref{MainHJZ}.  The result follows.
\end{proof}

\section{Homology of Graph Coverings}\label{sect:GraphCovering}

Throughout this section, we assume $\C{G}$ is a groupoid with object set $V$.  We
also assume $G=(V,E)$ is a simple graph, i.e. $V$ is a set and $E$ is a subset of
multi-subsets of $V$ of size 2.

\subsection{Groupoids and their actions}

A covariant (resp. contravariant) functor $X\colon \C{G}\to \Set$ is called a left
(resp. right) $\C{G}$-set.  One can alternatively define a left $\C{G}$ set with
the following datum:
\begin{enumerate}[(1)]
\item There is a function $i\colon X\to V$,
\item For every $x\in i^{-1}(c)$ and $b\xla{g}c\in\C{G}$ there is an
  element $(b\xla{g}c)\cdot x\in i^{-1}(b)$ such that
  \begin{enumerate}[(i)]
  \item $(c\xla{id}c)\cdot x = x$, and
  \item $(a\xla{f}b)\cdot((b\xla{g}c)\cdot x) = (a\xla{fg}c)\cdot x$
    for every $a\xla{f}b\in\C{G}$.
  \end{enumerate}
\end{enumerate}

One can similarly define a bilateral $\C{G}$-set $M$ either as a functor of the
form $M\colon \C{G}^e\to \Set$ where $\C{G}^e$ is the enveloping groupoid
$\C{G}\times_V \C{G}^{op}$ with a sequence of conditions similar to the conditions
given above.

\subsection{Groupoids and distributive laws}

\begin{proposition}\label{Factorization}
  Assume we have two subgroupoids $\C{H}$ and $\C{K}$ in $\C{G}$ such that the
  composition in $\C{G}$ induces bijections of the form
  \begin{equation}
    \label{FactorizationSystem}
    \bigsqcup_{u\in V} \Hom_{\C{K}}(u,w)\times \Hom_{\C{H}}(v,u) 
    \xra{\circ} \Hom_{\C{G}}(v,w) \xla{\circ}
    \bigsqcup_{u\in V} \Hom_{\C{H}}(u,w)\times \Hom_{\C{K}}(v,u) 
  \end{equation}
  Then there is an invertible distributive law of the form
  $\omega\colon\C{K}\times_V \C{H} \to \C{H}\times_V \C{K}$.
\end{proposition}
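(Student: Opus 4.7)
The plan is to build $\omega$ directly from the two factorization bijections in~\eqref{FactorizationSystem} and then verify the distributive-law axioms by tracking composites in $\C{G}$. Write $\kappa\colon \C{K}\times_V\C{H}\to\C{G}$ for the first composition map in~\eqref{FactorizationSystem} and $\eta\colon\C{H}\times_V\C{K}\to\C{G}$ for the second, both bijective by hypothesis; then I put $\omega:=\eta^{-1}\circ\kappa$. Explicitly, given a composable pair consisting of $h\in\Hom_{\C{H}}(v,u)$ and $k\in\Hom_{\C{K}}(u,w)$, I form $kh\in\Hom_{\C{G}}(v,w)$ and let $\omega(k,h)=(h',k')$ be the unique pair with $k'\in\Hom_{\C{K}}(v,u')$, $h'\in\Hom_{\C{H}}(u',w)$, and $h'k'=kh$. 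Invertibility is then automatic, since $\omega^{-1}=\kappa^{-1}\circ\eta$ is well defined by the same recipe with $\C{H}$ and $\C{K}$ swapped.

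It remains to verify the two transposition squares and the two unit compatibilities required for $\omega$ to be a distributive law in the sense of Section~\ref{Galois}. For the $\C{K}$-multiplicativity square I would start with a $\C{K}\C{K}\C{H}$-composable triple $(k_2,k_1,h)$, compute $\omega(k_2k_1,h)$ in one step, and separately compute the result of absorbing $k_1$ past $h$ via $\omega$ and then $k_2$ past the resulting $\C{H}$-piece, again via $\omega$. Applying $\eta$ to either output gives $k_2k_1h\in\C{G}$ by associativity of composition, so injectivity of $\eta$ forces the two outputs to coincide in $\C{H}\times_V\C{K}$. The $\C{H}$-multiplicativity square is symmetric. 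The unit axioms reduce to the trivial factorizations $k=k\circ\mathrm{id}=\mathrm{id}\circ k$ and $h=h\circ\mathrm{id}=\mathrm{id}\circ h$ in $\C{G}$, which force $\omega(k,\mathrm{id})=(\mathrm{id},k)$ and $\omega(\mathrm{id},h)=(h,\mathrm{id})$; here I use that $\C{H}$ and $\C{K}$, being subgroupoids, share identity morphisms with $\C{G}$.

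The main bookkeeping burden is to keep track of the pivot vertices $u,u'\in V$ that appear when alternating between the two factorizations: each application of $\eta^{-1}$ introduces a new intermediate object, and the multiplicativity squares demand that the pivots produced by the two sides of each diagram agree. Once the indexing is set up carefully, this agreement is precisely the content of the uniqueness half of each bijection in~\eqref{FactorizationSystem}, so no ingredient is needed beyond associativity in $\C{G}$ and the two bijectivity hypotheses.
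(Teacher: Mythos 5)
Your construction of $\omega$ as the composite of the first composition bijection with the inverse of the second is exactly the paper's construction (the paper calls that inverse $\xi$), and your verification of the multiplicativity squares by applying the injective map $\eta$ and invoking associativity is the same argument the paper compresses into ``the lower triangle is composed of compatible bijections, so the upper rectangle commutes.'' The proposal is correct and follows essentially the same route, being if anything slightly more explicit about the unit axioms and the inverse $\kappa^{-1}\circ\eta$.
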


\begin{proof}
  Let us denote the inverse of the right leg of the zig-zag of the bijections given
  in \eqref{FactorizationSystem} by $\xi$.  We define a morphism of groupoids
  $\omega\colon\C{K}\times_V \C{H} \to \C{H}\times_V \C{K}$ is the composition map
  followed by $\xi$
  \[ \C{K}\times_V \C{H} \xra{\circ} \C{G}\xra{\xi} \C{H}\times_V
  \C{K} \]
  We need to prove that this map is a left and a right transposition.
  We consider the diagram
  \[\xymatrix{
    \C{K}\times_V\C{K}\times_V\C{H} \ar[r]^{id\times\omega}\ar[d]_{\circ\times id} &
    \C{K}\times_V\C{H}\times_V\C{K} \ar[r]^{\omega\times id} &
    \C{H}\times_V\C{K}\times_V\C{K} \ar[d]^{id\times\circ} \\
    \C{K}\times_V\C{H} \ar[rr]^{\omega}\ar[dr]_{\circ} & &  \C{H}\times_V\C{K}\ar[dl]^{\circ} \\
    & \C{G}
  }\]
  Since the lower triangle is composed of compatible bijections, the upper rectangle
  must commute.
\end{proof}

\subsection{The free groupoid of a graph}

A path in $G$ is a finite sequence of vertices $(v_n,\ldots,v_0)$ such that
$\{v_{i+1},v_i\}\in E$ for every $i=0,\ldots,n-1$.  A path is called a cycle if it
starts and ends at the same vertex.  Let $\pi_1(G,x)$ be the free group generated
by all cycles on a vertex $x\in V$ subject to the relation
\begin{equation}\label{Reduction}
   (v,w,v) = v 
\end{equation}
for every edge $\{v,w\}\in E$. We now let $\pi_1(G)$ be the discrete groupoid
defined as the disjoint union $\bigsqcup_{v\in V} \pi_1(G,v)$, and we also define
$\C{F}_G$ to be the groupoid on $G$ where given any pair of vertices $x$ and $y$,
the Hom-set $\Hom_{\C{F}_G}(x,y)$ is the set of all paths from $x$ to $y$ subject
to the same relation as in Equation~\eqref{Reduction}.  Notice that in this
groupoid the inverse of an arrow (path) is the reverse path.

\subsection{The canonical groupoid of a graph}

Define a groupoid $\C{C}_G$ from $G$ as follows: the set of objects of $\C{C}_G$ is
the set of vertices $V$ of $G$.  For each $v,w\in V$ the set $\Hom_{\C{C}_G}(v,w)$
is empty if and only if there are no paths between $v$ and $w$ in $G$.  If there a
path, then the set $\Hom_{\C{C}_G}(v,w)$ contains a unique morphism simply denoted
by $w\xla{}v$, or by $p_{w,v}$ whenever it is convenient.  Since every arrow in
$\C{C}_G$ is invertible, this is a groupoid.

\begin{lemma}\label{FundamentalGroup}
  There is an invertible distributive law groupoids of the form
  \[ \bowtie\colon \pi_1(G)\times_V \C{C}_G\to \C{C}_G\times_V \pi_1(G) \] and an
  isomorphism of groupoids $\C{F}_G\cong \C{C}_G\bowtie\pi_1(G)$.
\end{lemma}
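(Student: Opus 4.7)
The plan is to apply Proposition~\ref{Factorization} with $\C{G} = \C{F}_G$, $\C{K} = \pi_1(G)$, and $\C{H} = \C{C}_G$, after realizing the latter two as subgroupoids of the free groupoid $\C{F}_G$. The invertible distributive law would then follow at once from the proposition, while the isomorphism $\C{F}_G \cong \C{C}_G\bowtie\pi_1(G)$ would be read off from the matched-pair construction.

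First, I would embed $\pi_1(G)$ into $\C{F}_G$ in the obvious way, sending each $\pi_1(G,v)$ into $\Hom_{\C{F}_G}(v,v)$ as the subgroup of loops at $v$. To embed $\C{C}_G$, I would fix a spanning forest $T\subseteq G$, namely a spanning tree in each connected component, and define $p_{w,v}\in\Hom_{\C{F}_G}(v,w)$ to be the unique simple path from $v$ to $w$ inside $T$. Because $T$ is a forest, the concatenation of two tree paths reduces, via the relation $(v,w,v)=v$, to the tree path between the extreme endpoints, so this choice realizes $\C{C}_G$ as a genuine subgroupoid of $\C{F}_G$.

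Next, I would verify the factorization hypothesis of Proposition~\ref{Factorization}. For $v,w$ in the same connected component, any morphism $\gamma\in\Hom_{\C{F}_G}(v,w)$ decomposes uniquely as $\gamma = p_{w,v}\circ\alpha$ with $\alpha = p_{w,v}^{-1}\circ\gamma\in\pi_1(G,v)$, and symmetrically as $\gamma = \beta\circ p_{w,v}$ with $\beta = \gamma\circ p_{w,v}^{-1}\in\pi_1(G,w)$; uniqueness is immediate because $\Hom_{\C{C}_G}(v,w)$ is a singleton. For $v,w$ in distinct components, both sides of the alleged bijection are empty, so the required bijections in~\eqref{FactorizationSystem} hold in either case.

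Proposition~\ref{Factorization} then produces the invertible distributive law $\bowtie$. For the asserted groupoid isomorphism, I would identify $\Hom_{\C{C}_G\bowtie\pi_1(G)}(v,w)$ with $\Hom_{\C{C}_G}(v,w)\times\pi_1(G,v)$ and then compose with the bijection $(p,\alpha)\mapsto p\circ\alpha$ landing in $\Hom_{\C{F}_G}(v,w)$; compatibility with composition is built into the definition of the twisted product via $\bowtie$, so it comes for free. The step I expect to be hardest is the verification that the tree-path concatenation, after the relation $(v,w,v)=v$ is applied, really produces the tree path between the extreme endpoints again---this is where the tree property of $T$ is essential and where the only nontrivial bookkeeping lives.
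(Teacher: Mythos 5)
Your proof is correct, and it reaches the same destination by the same key lemma (Proposition~\ref{Factorization} with $\C{K}=\pi_1(G)$, $\C{H}=\C{C}_G$ inside $\C{F}_G$), but the way you realize $\C{C}_G$ inside $\C{F}_G$ is genuinely different from the paper's. The paper never chooses a spanning forest: it defines an equivalence relation $\alpha\sim_1\beta\iff\alpha=\beta\gamma$ for some $\gamma\in\pi_1(G,x)$, shows the quotient $\C{F}_G/_{\sim_1}$ is a groupoid with singleton Hom-sets (hence is $\C{C}_G$), and extracts the set-theoretic bijections $\Hom_{\C{F}_G}(x,y)\cong\Hom_{\C{C}_G}(x,y)\times\pi_1(G,x)\cong\pi_1(G,y)\times\Hom_{\C{C}_G}(x,y)$ from the two coset decompositions before invoking Proposition~\ref{Factorization}. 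That route is choice-free, but it leaves slightly implicit how $\C{C}_G$ sits inside $\C{F}_G$ as an actual subgroupoid, which is literally what the hypothesis of Proposition~\ref{Factorization} asks for. Your spanning-forest construction supplies exactly that missing section: the tree paths $p_{w,v}$ form a genuine subgroupoid isomorphic to $\C{C}_G$ (closure under composition follows, as you note, because backtrack reduction of a concatenation of two tree paths yields the unique reduced --- hence tree --- path between the extreme endpoints, and reduced forms in the free groupoid on a graph are unique), at the cost of a non-canonical choice of forest. Your verification of the factorization bijections, including the collapse of the intermediate vertex $u$ to $v$ or $w$ because $\pi_1(G)$ is totally disconnected, is complete and matches the content of the paper's coset argument. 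In short: same strategy, but you trade the paper's canonical quotient description for an explicit splitting that makes the application of Proposition~\ref{Factorization} airtight.
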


\begin{proof}
  Let us define an equivalence relation $\sim_1$ on the morphisms of $\C{F}_G$ as
  follows: For every $\alpha,\beta\in \Hom_{\C{F}_G}(x,y)$ we write
  \[ \alpha\sim_1\beta \text{ if and only if there is one } \gamma\in \pi_1(G,x)
    \text{ with } \alpha = \beta\gamma
  \] 
  We claim the quotient $\C{F}_G/_{\sim_1}$ is the canonical groupoid $\C{C}_G$.
  It is clear that the Hom-set $\Hom_{\C{F}_G/_{\sim_1}}(x,y)$ is the set of
  equivalence classes $\Hom_{\C{F}_G}(x,y)/_{\sim_1}$, and that each quotient set
  $\Hom_{\C{F}_G}(x,y)/_{\sim_1}$ either contains no elements, or contains exactly
  one element for every $x,y\in V$.  This follows from the fact that given any two
  paths $\alpha,\beta\in \Hom_{\C{F}_G}(x,y)$ we have $\alpha\sim_1\beta$ since
  $\alpha = \beta(\beta^{-1}\alpha)$.  So, set-wise the canonical groupoid and our
  quotient object have the same elements.  We must verify that we have an
  associative composition defined on the equivalence class of morphisms.  To this
  end, let us take $a,a'\in[\alpha]\in \Hom_{\C{F}_G/_{\sim_1}} (y,z)$ and
  $b,b'\in[\beta]\in \Hom_{\C{F}_G/_{\sim_1}} (x,y)$ with $a'=ac$ and $b'=bd$.  We
  see that $ab = ab'd^{-1}$ which means $ab\sim_1 ab'$.  On the other hand
  $ab' = a'c^{-1} b' = a'b' (cb')^{-1}b'$ which means $ab'\sim a'b'$.  Thus we see
  that the composition $[\alpha][\beta]\in \Hom_{\C{F}_G/_{\sim_1}}(x,z)$ is
  well-defined.  The fact that the composition is associative and has an identity
  follows immediately.  Moreover, we also have a bijection
  \[ \Hom_{\C{F}_G}(x,y) = \Hom_{\C{F}_G/_{\sim_1}}(x,y) \times \pi_1(G,x) \] for
  every $x,y\in V$.  On the other hand, we also have a dual equivalence relation
  $\sim_2$ defined as
  \[ \alpha\sim_2 \beta \text{ if and only if there is one } \gamma\in\pi_1(G,y)
    \text{ with } \gamma\alpha = \beta
  \]
  Notice that we have $\alpha\sim_1 \beta$ if and only if $\alpha\sim_2 \beta$ and
  we have a bijection of the form
  \[ \Hom_{\C{F}_G}(x,y) = \pi_1(G,y) \times \Hom_{\C{F}_G/_{\sim_2}}(x,y) \] for
  every $x,y\in V$.  The the result follows from Proposition~\ref{Factorization}.
\end{proof}

\subsection{Unramified covering of a graph}\label{Covering}

Our main reference for graph coverings is \cite{Friedman:SheavesOnGraphs}.

If $G=(V,E)$ and $G'=(V',E')$ are two simple graphs, a function $f\colon V'\to V$
is called \emph{a map of graphs} if the induced map on the edges restricts to a map
of the form $f\colon E'\to E$.  We are going to represent this category of graphs
as $\Graph$.

\begin{definition}\label{defn:covering}
  A map of simple graphs $f\colon G'\to G$ is called \emph{a covering} if
  $f\colon V'\to V$ is onto.  A covering $f$ is called \emph{finite} if $f^{-1}(v)$
  is a finite set for every $v\in V$.  A finite covering $f$ is called \emph{an
    $n$-fold covering} if $|f^{-1}(v)|=n$ for every $v\in V$.  A covering $f$ is
  called \emph{unramified} (resp. \emph{étale}) if for every $\{a,b\}\in E$ there
  is a bijective (resp. injective) function
  $\sigma_{b,a}\colon f^{-1}(a)\to f^{-1}(b)$ such that
  $\{x,\sigma_{b,a}(x)\}\in E'$ for every $x\in f^{-1}(a)$.
\end{definition}

For the rest of the subsection, assume $f\colon G'\to G$ is an unramified covering.

\begin{lemma}\label{LemmaUnramifiedCovering}
  The vertex set $V'$ of $G'$ is a bilateral $\C{F}_G$-set and there are bijections
  of the form $P(G)\times_V V' \cong P(G') \cong V'\times_V P(G) $.  Thus we have
  an invertible transposition between $\C{F}_G$ and $V'$.
\end{lemma}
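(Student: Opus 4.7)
The plan is to build the bilateral $\C{F}_G$-action on $V'$ by unique path-lifting along the edge bijections $\sigma_{b,a}$ from Definition~\ref{defn:covering}. Given a path $p=(v_n,\ldots,v_0)$ in $G$ and $x\in V'$ with $f(x)=v_0$, set inductively $x_0:=x$ and $x_{i+1}:=\sigma_{v_{i+1},v_i}(x_i)$. This produces a path $\tilde p=(x_n,\ldots,x_0)$ in $G'$ with $f(\tilde p)=p$ starting at $x$, and the left action of a morphism $p\in\Hom_{\C{F}_G}(f(x),w)$ is defined by $p\cdot x:=x_n$. The right action is defined dually by lifting paths so they end at a prescribed vertex.

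The first thing to check is that these actions descend through the backtracking relation $(v,w,v)=v$ that defines $\C{F}_G$. For an edge $\{v,w\}\in E$, each $x\in f^{-1}(v)$ has a unique neighbour in $f^{-1}(w)$, namely $\sigma_{w,v}(x)$, and by symmetry the unique neighbour of $\sigma_{w,v}(x)$ in $f^{-1}(v)$ is $x$. Hence $\sigma_{v,w}\sigma_{w,v}=\mathrm{id}_{f^{-1}(v)}$, so the lift of $(v,w,v)$ starting at $x$ returns to $x$. The identity and associativity axioms for the left action are then immediate from the inductive definition, and the right action is checked symmetrically. The two actions commute because lifting a composable pair of paths can be done in either order: lifting from the source end and then continuing, versus lifting the whole composition and reading off the appropriate segments, must agree by uniqueness of the edge-by-edge lift.

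For the bijections, I would send $(p,x)\in P(G)\times_V V'$ to its lift $\tilde p\in P(G')$ starting at $x$, with inverse the pair $(f(q),\text{source}(q))$ for $q\in P(G')$. Well-definedness modulo the defining relations of $\C{F}_G$ and $\C{F}_{G'}$ is guaranteed by the preceding paragraph together with the observation that $f$ sends backtracks to backtracks. Injectivity and surjectivity follow from uniqueness and existence of the lift. The bijection $V'\times_V P(G)\cong P(G')$ is obtained identically with end-point lifting. Composing the inverse of the first bijection with the second yields an invertible morphism $\omega\colon P(G)\times_V V'\to V'\times_V P(G)$, and by Proposition~\ref{Factorization} (applied in the enlarged groupoid generated by $\C{F}_G$ and the indiscrete groupoid on $V'$) this $\omega$ is an invertible distributive law, i.e.\ an invertible transposition between $\C{F}_G$ and $V'$.

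The main obstacle is the verification that the lift is insensitive to backtracking, since this is precisely what allows the lifting to factor through $\C{F}_G$ rather than through the free path groupoid; everything else (functoriality of the lift, the two bijections, and the commutation of left and right actions) follows cleanly from the uniqueness of the edge-by-edge lift in an unramified covering.
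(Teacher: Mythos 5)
Your proposal is correct and follows essentially the same route as the paper: both build the bijection $E'\cong V'\times_V E$ from the structure maps $\sigma_{b,a}$, extend it edge-by-edge to paths to get $P(G)\times_V V'\cong P(G')\cong V'\times_V P(G)$, and then invoke Proposition~\ref{Factorization} to produce the invertible transposition. You merely supply details the paper leaves implicit, namely the compatibility of the lift with the backtracking relation $(v,w,v)=v$ and the identity $\sigma_{v,w}\sigma_{w,v}=\mathrm{id}$, both of which rest on the same implicit uniqueness of fibre neighbours that the paper's own bijection $E'\cong V'\times_V E$ already assumes.
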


\begin{proof}
  We have $f\colon V'\to V$ and $s\colon E\to V$.  Since the covering is étale,
  there is an injective map $\sigma_{b,a}\colon f^{-1}(a)\to f^{-1}(b)$ for every
  $(a,b)\in E$.  Then for every $(x,y)\in E'$ can be written as
  $(x,\sigma_{b,a}(x))$ where $a=f(x)$ and $b=f(y)$.  So, there is a bijection of
  the form $E'\cong V'\times_V E$.  One can extend the bijection to
  $P(G')\cong V'\times_V P(G)$.  The other bijection is obtained similarly.  Then
  the result follows from Proposition~\ref{Factorization}.
\end{proof}

\begin{proposition}\label{RelativeZeroDimensional}
  There is an isomorphism of groupoids of the form
  $\C{F}_{G'}\cong V'\times_V \C{F}_G$ where source and target maps for
  $V'\times_V \C{F}_G$ are defined as
  \[ s(x, f(x)\xra{\alpha} v) = x \quad\text{ and }\quad t(x, f(x)\xra{\alpha} v) =
    x\cdot\alpha \]
\end{proposition}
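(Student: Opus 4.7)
The plan is to build an isomorphism $\Phi \colon \C{F}_{G'} \to V' \times_V \C{F}_G$ using the path lifting bijection $P(G') \cong V' \times_V P(G)$ provided by Lemma~\ref{LemmaUnramifiedCovering}, and to check that the groupoid structure on the right-hand side---defined by the given source and target maps together with the bilateral $\C{F}_G$-action on $V'$---matches composition of paths in $G'$. On objects both groupoids are $V'$, so $\Phi$ is the identity there. On a morphism $[\gamma] \in \Hom_{\C{F}_{G'}}(x,y)$ represented by a path $\gamma = (x = x_0, x_1, \ldots, x_n = y)$ in $G'$, I would set $\Phi([\gamma]) = (x, [f(\gamma)])$ where $f(\gamma) = (f(x_0), f(x_1), \ldots, f(x_n))$ is the image path in $G$, viewed as a morphism $f(x)\to f(y)$ in $\C{F}_G$.

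The next step is to verify that $\Phi$ descends to the quotients by the reduction relation $(v,w,v) = v$. If $\gamma$ contains a segment $(x_{i-1}, x_i, x_{i-1})$ with $\{x_{i-1},x_i\}\in E'$, then its image contains $(f(x_{i-1}), f(x_i), f(x_{i-1}))$ with $\{f(x_{i-1}), f(x_i)\}\in E$, so it reduces in $\C{F}_G$. Conversely---and this is the content needed for the inverse to be well-defined---if $\alpha$ has a segment $(v,w,v)$, then the unique lift starting at $x\in f^{-1}(v)$ traverses $\sigma_{w,v}(x)$ and returns to $x$, giving a reducible segment in $G'$. The inverse $\Psi$ sends a pair $(x, [\alpha])$ with $\alpha\colon f(x)\to v$ in $\C{F}_G$ to the equivalence class of the unique lift of $\alpha$ starting at $x$, whose existence and uniqueness follow from the bijection $V'\times_V P(G) \cong P(G')$ of Lemma~\ref{LemmaUnramifiedCovering} together with the compatibility of reductions just checked. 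By construction the endpoint of this lift is exactly $x\cdot\alpha$, which matches the target map $t(x,\alpha) = x\cdot \alpha$ in the statement.

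Finally I would verify functoriality: if $\gamma \in \Hom_{\C{F}_{G'}}(x,y)$ and $\delta \in \Hom_{\C{F}_{G'}}(y,z)$, then $f(\gamma\delta) = f(\gamma)f(\delta)$ and the source is still $x$, so $\Phi(\gamma\delta) = (x, f(\gamma)f(\delta))$, which is precisely the composition of $(x, f(\gamma))$ with $(y, f(\delta)) = (x\cdot f(\gamma), f(\delta))$ in $V'\times_V \C{F}_G$ as dictated by the semidirect-product structure coming from the bilateral action. Identities are preserved because the constant path at $x$ maps to $(x, \mathrm{id}_{f(x)})$.

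The subtle point---and the main place where care is required---is that the groupoid $V'\times_V \C{F}_G$ only makes sense because the bilateral $\C{F}_G$-action of Lemma~\ref{LemmaUnramifiedCovering} yields a well-defined composition via the invertible transposition, and that reductions in $\C{F}_G$ lift to reductions in $\C{F}_{G'}$ (without the unramified hypothesis, this could fail, since a nontrivial loop in $G$ at $f(x)$ need not lift to a loop at $x$). Once this is in hand, $\Phi$ and $\Psi$ are inverse bijections on morphism sets and both are functorial, so they constitute an isomorphism of groupoids.
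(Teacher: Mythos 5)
Your proof is correct and follows essentially the same route as the paper, which simply states that the result ``follows from Lemma~\ref{LemmaUnramifiedCovering}''; you have supplied the details the paper leaves implicit, namely that the path-lifting bijection $P(G')\cong V'\times_V P(G)$ is compatible with the reduction relation $(v,w,v)=v$ on both sides and with composition. Your observation that lifting backtracks is exactly where the unramified (bijective $\sigma_{b,a}$) hypothesis enters is the right point to isolate.
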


\begin{proof}
  Follows from Lemma~\ref{LemmaUnramifiedCovering}.
\end{proof}

\subsection{Fibrations of path algebras}\label{HochschildOfGraphAlgebras}

For the sake of brevity, we are going to use $\C{A}_{G}$ to denote the groupoid
algebra $\Bbbk[\C{F}_{G}]$ for every graph $G$.  Our main reference for Hochschild
and cyclic (co)homology of path algebras
is~\cite{Benson:CyclicHomologyOfPathAlgebras}.

The following result is well-known.  We furnish a proof for the sake of
completeness.
\begin{lemma}\label{CohomologicalDimensionOne}
  The Hochschild cohomological dimension of $\C{A}_G$ is at most 1 for every graph
  $G$.
\end{lemma}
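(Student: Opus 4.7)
The plan is to construct an explicit $\C{A}_G^e$-projective resolution of $\C{A}_G$ of length one; once in hand, it yields $HH^n(\C{A}_G, X) = 0$ for every $n \geq 2$ and every $X \in \lmod{\C{A}_G^e}$. Let $R = \bigoplus_{v \in V} \Bbbk e_v \subseteq \C{A}_G$ be the commutative separable subalgebra of vertex idempotents, fix an arbitrary orientation on each edge $e = \{v, w\} \in E$ with chosen generator $x_e \colon s(e) \to t(e)$ of $\C{F}_G$, and form the $R$-bimodule $N = \bigoplus_{e \in E} \Bbbk \cdot x_e$ for which $e_u \cdot x_e \cdot e_{u'} = x_e$ precisely when $u = t(e)$ and $u' = s(e)$.

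The proposed sequence is
\[ 0 \to \C{A}_G \eotimes{R} N \eotimes{R} \C{A}_G \xra{\partial} \C{A}_G \eotimes{R} \C{A}_G \xra{\mu} \C{A}_G \to 0, \]
where $\mu$ is multiplication and $\partial(a \otimes x_e \otimes b) = a x_e \otimes b - a \otimes x_e b$. Separability of $R$ over $\Bbbk$ makes both middle terms $\C{A}_G^e$-projective, while $\mu \circ \partial = 0$ and the surjectivity of $\mu$ are immediate. That $\ker \mu$ lies in the image of $\partial$ follows from a telescoping on reduced words: any morphism $\alpha = y_1 \cdots y_n$ of $\C{F}_G$ (with each $y_i \in \{x_e^{\pm 1}\}$) satisfies $\alpha \otimes 1 - 1 \otimes \alpha = \sum_k (y_1 \cdots y_{k-1})(y_k \otimes 1 - 1 \otimes y_k)(y_{k+1} \cdots y_n)$, and the relation $x_e x_e^{-1} = e_{t(e)}$ yields $x_e^{-1} \otimes 1 - 1 \otimes x_e^{-1} = -\partial(x_e^{-1} \otimes x_e \otimes x_e^{-1})$, so every $y_k \otimes 1 - 1 \otimes y_k$ appearing in the telescoping, and thus $\ker \mu$ itself, is in the image of $\partial$.

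The principal obstacle is injectivity of $\partial$. A $\Bbbk$-basis of $\C{A}_G \eotimes{R} N \eotimes{R} \C{A}_G$ is given by composable triples $(\alpha, x_e, \beta)$ of reduced morphisms of $\C{F}_G$, which $\partial$ sends to $\alpha x_e \otimes \beta - \alpha \otimes x_e \beta$; the subtlety is that $\alpha x_e$ or $x_e \beta$ may reduce via backtracking when $\alpha$ terminates in $x_e^{-1}$ or $\beta$ starts with $x_e^{-1}$. I would handle this by fixing a reduced normal form for morphisms of $\C{F}_G$ via a spanning forest together with the factorization $\C{F}_G \cong \C{C}_G \bowtie \pi_1(G)$ of Lemma~\ref{FundamentalGroup}, then filtering both $R$-tensor products by total reduced word length: an associated-graded computation then rules out any non-trivial kernel. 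With a length-one $\C{A}_G^e$-projective resolution of $\C{A}_G$ established, the conclusion $HH^n(\C{A}_G, X) = \Ext^n_{\C{A}_G^e}(\C{A}_G, X) = 0$ for all $n \geq 2$ follows.
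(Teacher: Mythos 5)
Your proposal follows the same strategy as the paper: exhibit a length-one resolution of $\C{A}_G$ by $\C{A}_G^e$-modules induced from bimodules over the separable vertex subalgebra, with the differential $a\otimes x\otimes b\mapsto ax\otimes b - a\otimes xb$. The one substantive difference is that you take a single generator $x_e$ per edge, whereas the paper's $F=E\sqcup E^{-1}$ carries both orientations as independent generators; your choice is in fact the correct one, since with both orientations the map fails to be injective (for instance $x_e^{-1}\otimes x_e\otimes x_e^{-1} + 1\otimes x_e^{-1}\otimes 1$ lies in the kernel, because $\delta$ sends it to $(1\otimes x_e^{-1}-x_e^{-1}\otimes 1)+(x_e^{-1}\otimes 1-1\otimes x_e^{-1})=0$), so the paper's displayed sequence is really the quotient you write down. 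You also correctly identify injectivity of $\partial$ as the only delicate point and your proposed filtration by reduced word length is a workable route; a slightly cleaner alternative is the standard Fox-calculus contracting homotopy $1\otimes\beta\mapsto\sum_k y_1\cdots y_{k-1}\otimes\widetilde{y_k}\otimes y_{k+1}\cdots y_n$ on reduced words (with $\widetilde{x_e^{-1}}:=-x_e^{-1}\otimes x_e\otimes x_e^{-1}$), which establishes exactness and injectivity simultaneously. Either way, your argument is sound and actually more careful than the one in the paper.
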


\begin{proof}
  Let $F$ be the disjoint union of edges $E$ of $G$, and its inverse edges
  $E^{-1}$.  Then $\C{A}_G$ viewed as a bimodule over itself has a short resolution
  of the form
  \[ 0 \to \C{A}_G\eotimes{V} F\eotimes{V} \C{A}_G \xrightarrow{\delta} \C{A}_G\eotimes{V} \C{A}_G \to 0
  \]
  where
  \[ \delta(\alpha\otimes f\otimes \beta) = \alpha f\otimes\beta - \alpha\otimes f
    \beta \] for every homogeneous element $\alpha\otimes f\otimes \beta$ in
  $\C{A}_G\eotimes{V} F\eotimes{V} \C{A}_G $.
\end{proof}

For the rest of the subsection, assume $g\colon G'\to G$ is a finite unramified
connected covering of a connected graph $G$.

\begin{proposition}\label{GraphMainResult}
  There is a smooth Galois fibration of groupoid algebras of the form
  $\C{A}_G\subseteq \C{A}_{G'}$.
\end{proposition}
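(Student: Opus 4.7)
The plan is to lift the groupoid-level factorization from Proposition~\ref{RelativeZeroDimensional} to the groupoid algebra level and verify that this yields a Galois (hence smooth) fibration. I would take the fibre algebra to be $C:=\Bbbk[V'\times_V V']$, the groupoid algebra of the pair groupoid of fibres of the covering $g$.

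First, I would define the inclusion $\iota\colon\C{A}_G\hookrightarrow\C{A}_{G'}$ via the classical trace construction, sending a morphism $\alpha\colon u\to v$ in $\C{F}_G$ to $\iota(\alpha)=\sum_{x\in g^{-1}(u)}\tilde\alpha_x$, where $\tilde\alpha_x$ denotes the unique lift of $\alpha$ starting at $x$. Using the functoriality of path-lifting and the fact that the covering is finite unramified, one checks that $\iota$ is an algebra homomorphism: the uniform sum over the fibre makes the composition of traces behave like the trace of the composition.

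Second, I would construct the distributive law $\bowtie\colon C\otimes\C{A}_{G'}\to\C{A}_{G'}\otimes C$ at the algebra level, lifting the groupoid factorization in the spirit of Proposition~\ref{Factorization}. Concretely, on a basis tensor $(y,z)\otimes\beta$ with $\beta\colon z\to w$ a path in $\C{F}_{G'}$, I would set $\bowtie((y,z)\otimes\beta)=\tilde{g(\beta)}_y\otimes(y\cdot g(\beta),w)$. The left and right transposition axioms of $\bowtie$ then follow from the uniqueness of lifts. The invariance condition $\iota(\C{A}_G)\subseteq\C{A}_{G'}^C$ holds because the trace is a fibrewise-uniform sum: sliding a fibre-pair past such a sum permutes the summation index without changing the total.

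Third, I would define the canonical map $can\colon\C{A}_{G'}\otimes_{\C{A}_G}\C{A}_{G'}\to\C{A}_{G'}\bowtie C$ by $can(\beta\otimes\gamma)=\beta\cdot\tilde{g(\gamma)}_y\otimes(y\cdot g(\gamma),w)$ for $\beta\colon x\to y$ and $\gamma\colon z\to w$ with $g(y)=g(z)$, and verify that it is an isomorphism of $\C{A}_{G'}^e$-modules. Once $can$ is shown to be an isomorphism, the fibration is Galois, and smoothness comes for free since the kernel is zero and hence $\C{A}_{G'}^e$-projective. The main technical obstacle is the careful bookkeeping required to check that $can$ is both well-defined and bijective: the tensor relations on the left translate into shifts of lifts along paths via the relation $\beta\cdot\iota(\alpha)\otimes\gamma=\beta\otimes\iota(\alpha)\cdot\gamma$, while on the right they are absorbed into the twisted product and the fibre-pair data. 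Pinning down this translation precisely is where the combinatorial content of the argument lies, and it rests essentially on the unique path-lifting property encoded in Proposition~\ref{RelativeZeroDimensional}.
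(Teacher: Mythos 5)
Your overall route coincides with the paper's: the embedding is the same trace map (the paper phrases it as sending each vertex idempotent to $\sum_{f\in g^{-1}(v)}f$, which forces your formula on paths), and the Galois structure is obtained by transporting the groupoid factorization $\C{F}_{G'}\cong V'\times_V\C{F}_G$ of Proposition~\ref{RelativeZeroDimensional} to the algebra level. The paper's own proof is in fact far terser than yours --- it never names the fibre algebra nor writes down $\bowtie$ or $can$ --- so your explicit choice $C=\Bbbk[V'\times_V V']$ and the formulas for $\bowtie$ and $can$ are a reasonable completion of what the paper leaves implicit, and the bookkeeping you flag (well-definedness of $can$ over the tensor relations) is indeed where the real work sits.

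The one place where you genuinely diverge, and where there is a gap to address, is the smoothness step. You deduce smoothness ``for free'' because the kernel of $can$ vanishes; that argument only yields smoothness of the \emph{fibration} in the sense of Section~\ref{sect:Fibrations} (kernel of $can$ projective). The paper instead invokes Lemma~\ref{CohomologicalDimensionOne}: since $\C{A}_{G'}$ has Hochschild cohomological dimension at most $1$, the syzygy $\Sigma_{\C{A}_{G'}|\C{A}_G}=\ker(\C{A}_{G'}\otimes_{\C{A}_G}\C{A}_{G'}\to\C{A}_{G'})$ is a projective $\C{A}_{G'}$-bimodule, i.e.\ the \emph{extension} is smooth. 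This is not a cosmetic difference: immediately after the proposition the paper concludes reduced flatness from Theorem~\ref{AlmostSmoothBegetsReducedFlat}, which requires the extension (not the fibration) to be almost smooth, and Theorem~\ref{UnramifiedCovering} depends on that. So you should either reinstate the dimension-shifting argument from Lemma~\ref{CohomologicalDimensionOne}, or use your Galois isomorphism to identify $\Sigma_{\C{A}_{G'}|\C{A}_G}$ with the kernel of the multiplication-induced map $\C{A}_{G'}\bowtie C\to\C{A}_{G'}$ and prove projectivity of that kernel directly; Galois-ness alone does not supply it.
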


\begin{proof}
  We embed $\C{A}_G$ to $\C{A}_{G'}$ by sending each idempotent $e\in V$ to
  $\sum_{f\in g^{-1}(v)} f$ in $\C{A}_{G'}$.  This determines a unique map sending
  each edge $\{x,y\}\in E$ to an element in $\C{A}_{G'}$.  Smoothness is forced on
  the extension by Lemma~\ref{CohomologicalDimensionOne} since both algebras have
  Hochschild cohomological dimension at most 1.
\end{proof}

Observe that since $\C{A}_{G'}$ is free over $\C{A}_{G}$ it is faithfully flat, and
since the extension was smooth it is also reduced flat by
Theorem~\ref{AlmostSmoothBegetsReducedFlat}.

\begin{theorem}\label{UnramifiedCovering}
  The relative Hochschild homology groups $HH_n(\C{A}_{G'}| \C{A}_G,X)$ are trivial
  for every $\C{A}_{G'}$-bimodule $X$ and for $n\geq 1$.  Thus there is a natural
  epimorphism $HH_1(\C{A}_G,X)\to HH_1(\C{A}_{G'},X)$ for every
  $\C{A}_{G'}$-bimodule $X$, and isomorphism of the form
  $HC_n(\C{A}_G)\cong HC_n(\C{A}_{G'})$ for every $n\geq 2$.
\end{theorem}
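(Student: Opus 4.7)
The plan is to apply the Galois-fibration machinery of Section~\ref{sect:Fibrations}. By Proposition~\ref{GraphMainResult} the extension $\C{A}_G\subseteq\C{A}_{G'}$ is a smooth Galois fibration, and the factorization $\C{F}_{G'}\cong V'\times_V\C{F}_G$ of Proposition~\ref{RelativeZeroDimensional} shows that $\C{A}_{G'}$ is a free (hence faithfully flat) $\C{A}_G$-module on each side, so Theorem~\ref{AlmostSmoothBegetsReducedFlat} makes the extension reduced flat. The fibre algebra $C$ is determined by the distributive law arising from this factorization; since the covering is finite and $\text{char}(\Bbbk)=0$, $C$ is a finite-dimensional semisimple $\Bbbk$-algebra (essentially the algebra of the deck transformation groupoid of the covering), hence separable over $\Bbbk$, so that $HH_n(C,M)=0$ for every $C$-bimodule $M$ and every $n\geq 1$.

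The first assertion is then immediate from Theorem~\ref{MainHJZ}: $HH_n(\C{A}_{G'}|\C{A}_G,X)\cong HH_n(C,X/[X,\C{A}_G])=0$ for every $\C{A}_{G'}$-bimodule $X$ and every $n\geq 1$. The epimorphism in the second assertion reads directly off the Hochschild-Jacobi-Zariski long exact sequence of Proposition~\ref{HJZ}: the four-term sequence
\[HH_2(\C{A}_{G'}|\C{A}_G,X)\to HH_1(\C{A}_G,X)\to HH_1(\C{A}_{G'},X)\to HH_1(\C{A}_{G'}|\C{A}_G,X)\]
has both outer terms zero, so the middle map is an isomorphism, and in particular the claimed surjection.

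For the cyclic isomorphism, combining $HH_n(\C{A}_{G'}|\C{A}_G)=0$ for $n\geq 1$ with Connes' SBI sequence for the relative theory forces $HC_n(\C{A}_{G'}|\C{A}_G)$ to be $2$-periodic above degree $1$ (as already noted in Theorem~\ref{AlmostSmoothBegetsReducedFlat}); Lemma~\ref{CohomologicalDimensionOne} gives $HH_n(\C{A}_G)=HH_n(\C{A}_{G'})=0$ for $n\geq 2$, so the Connes $S$-operator is an isomorphism $HC_n\xrightarrow{\cong}HC_{n-2}$ on both algebras for $n\geq 3$. Comparing the two SBI ladders via the isomorphism $HH_n(\C{A}_G,X)\cong HH_n(\C{A}_{G'},X)$ for $n\geq 1$ (the strengthened form of part~$2$ with $X=\C{A}_{G'}$) together with the cyclic Jacobi-Zariski long exact sequence, a five-lemma argument reduces the statement to the base case $n=2$, where $HC_2$ is realized as the kernel of the Connes boundary $B\colon HC_0\to HH_1$ on both sides and the remaining comparison is carried out by tracking these Connes operators.

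The main obstacle is the clean identification of the fibre algebra $C$ as separable, since the $B^e$-module isomorphism $\C{A}_{G'}\otimes_{\C{A}_G}\C{A}_{G'}\cong\C{A}_{G'}\bowtie C$ must be matched with the combinatorial deck data of the covering; this is what ultimately makes Theorem~\ref{MainHJZ} applicable. A secondary subtlety is the cyclic case: $HC_0(\C{A}_{G'}|\C{A}_G)$ does not vanish, so the isomorphism on $HC_n$ for $n\geq 2$ is not read off the Jacobi-Zariski sequence in one stroke but only after invoking finite Hochschild cohomological dimension to collapse the picture onto the periodic regime where the Hochschild isomorphisms propagate.
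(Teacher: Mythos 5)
Your treatment of the first two assertions follows the paper's own route (Proposition~\ref{GraphMainResult}, Theorem~\ref{MainHJZ}, Proposition~\ref{HJZ}) and is correct: once the fibre is known to be separable, $HH_n(\C{A}_{G'}|\C{A}_G,X)\cong HH_n(C,X/[X,\C{A}_G])$ vanishes for $n\geq 1$, and the epimorphism on $HH_1$ drops out of the sequence~\eqref{HomologicalHJZ}. Two small points: the fibre produced by Proposition~\ref{RelativeZeroDimensional} is $\Bbbk[V']$, the algebra of the fibre \emph{sets} over $\Bbbk[V]$ (a finite product of copies of $\Bbbk$), rather than a deck-transformation groupoid algebra --- harmless, since either is separable, although the tensor products in the bar complex of the fibre are then taken over $\Bbbk[V]$ rather than $\Bbbk$ --- and your upgrade of the epimorphism to an isomorphism uses exactness of~\eqref{HomologicalHJZ} at the $HH_1(A,\cdot)$ spot, which the paper pointedly does not claim.

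The cyclic statement is where your argument has a genuine gap. The five-lemma comparison of SBI ladders requires the map $HH_n(\C{A}_G)\to HH_n(\C{A}_{G'})$, with each algebra as its own coefficient bimodule, to be an isomorphism for $n\geq 1$; what the Jacobi--Zariski machinery actually gives you (your ``strengthened part 2 with $X=\C{A}_{G'}$'') is $HH_n(\C{A}_G,Res^{B^e}_{A^e}\C{A}_{G'})\cong HH_n(\C{A}_{G'})$, whose left-hand coefficient module is $\C{A}_{G'}$, not $\C{A}_G$. In degrees $0$ and $1$ --- exactly where all of $HC_{\geq 2}$ of these algebras of Hochschild dimension $\leq 1$ is concentrated --- these groups differ, so the two ladders do not match up, and the ``tracking of Connes operators at $n=2$'' is the entire missing content rather than a finishing touch. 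Moreover, the odd-degree half of the claim cannot be repaired by any comparison argument: $\C{A}_G\cong M_{|V|}(\Bbbk[F_r])$ with $r=|E|-|V|+1$, so Morita invariance and Burghelea's computation give $HC_{2k+1}(\C{A}_G)\cong H_1(F_r;\Bbbk)=\Bbbk^{r}$ for $k\geq 1$, while $HC_{2k+1}(\C{A}_{G'})\cong\Bbbk^{r'}$ with $r'=1+(\deg f)(r-1)$. For any nontrivial connected covering of a graph with $r\geq 2$ (for instance a connected double cover of a simple graph with first Betti number $2$) these dimensions disagree, so $HC_3(\C{A}_G)\not\cong HC_3(\C{A}_{G'})$. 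The same computation shows that the vanishing $HC_{2k+1}(\C{A}_{G'}|\C{A}_G)=0$ --- which does follow from your first assertion together with the relative SBI sequence --- is already incompatible with exactness of the cyclic Jacobi--Zariski sequence in odd degrees, so the obstruction lies in the inputs both you and the paper rely on; only the even-degree isomorphisms, where both sides are one-dimensional and generated by $S$-powers of the class of the unit, can be salvaged along these lines.
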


\begin{proof}
  The result follows from Proposition~\ref{RelativeZeroDimensional},
  Lemma~\ref{CohomologicalDimensionOne}, Proposition~\ref{GraphMainResult},
  Proposition~\ref{HJZ}, and Theorem~\ref{MainCJZ}.
\end{proof}

There is an analogous isomorphism $HH_1(\C{A}_G)\cong HH_1(\C{A}_{G'})$ one can get
from \cite{Benson:CyclicHomologyOfPathAlgebras}, but the epimorphism in
Theorem~\ref{UnramifiedCovering} works with Hochschild homology with arbitrary
coefficients.  Moreover, since the isomorphisms in cyclic cohomology in
Theorem~\ref{UnramifiedCovering} are obtained by a fibration sequence, one can try
to get similar results for path algebras with relations provided we can write an
appropriate fibration, i.e. reduced flat extension.

\subsection{Local Coefficients on graphs and their homology}\label{subsect:LocalCoefficients}

Consider our definition of a covering of graphs $f\colon G'\to G$ we gave in
Definition~\ref{defn:covering}.  One can think of an unramified covering as a
groupoid whose set of objects is the set $V$ of vertices of the base $G$, and whose
morphisms are given by the structure bijections
$e_{y,x}\colon f^{-1}(x)\to f^{-1}(y)$.  Or, one can think of them as \emph{local
  coefficient systems:}
\begin{definition}
  A local coefficient system $\C{H}$ on a graph $G$ is a collection of objects
  $\{H_x\}_{x\in V}$ in a category (sets, groups, algebras, Hopf algebras etc.)
  together with a collection of isomorphisms $e_{x,y}\colon H_x\to H_y$ for every
  edge $(x,y)\in E$.
\end{definition}
One can see that the fundamental groupoid $\pi_1(G) := \{\pi_1(G,x)\}_{x\in V}$ is
a local coefficient system of groups for every graph $G$.  Also, every unramified
covering $G'\to G$ is a local coefficient system of sets on $G$, by definition.  We
refer the reader to~\cite{Steenrod:HomologyWithLocalCoefficients} or
\cite[pg. 58]{Spanier:AlgebraicTopology} for local coefficient systems defined on
topological spaces.

Let us assume $f\colon G'\to G$ is a local coefficient system of sets on $G$,
i.e. an unramified covering over $G$.  For every $v\in V$, let us define a
subgroupoid
\[ \C{S}_v = \{\alpha\in \pi_1(G,v)| \alpha\triangleright x = x, \text{ for every }
  x\in f^{-1}(v) \} \] It is easy to see that if $\beta$ is a path from a vertex
$v$ to another $w$, then $\beta \C{S}_v \beta^{-1} = \C{S}_w$.  Thus, $\C{S}$ is
another local coefficient system of groups on $G$, and more importantly, it is
normal in $\pi_1(G)$.  Now, we have another local coefficient system of groups
$\pi_1(G)/\C{S}$.  We call this new system as \emph{the monodromy groupoid} of
$G'$, and denote it by $\C{M}_{G'\downarrow G}$, where the monodromy group on each
vertex is denoted by $\C{M}_v := \pi_1(G,v)/S_v$ for every $v\in V$.  One can
easily see that we have
\begin{proposition}   
  The distributive law in Lemma~\ref{FundamentalGroup} we had for $\C{C}_G$ and
  $\pi_1(G)$ now extends to a distributive law between $\C{C}_G$ and $\C{S}$, and
  we get $\C{M}_{G'\downarrow G} \cong \C{C}_G\bowtie \pi_1(G)/\C{S}$.
\end{proposition}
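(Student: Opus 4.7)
The plan is to split the proposition into two tasks: first, to verify that the distributive law $\bowtie$ of Lemma~\ref{FundamentalGroup} restricts to a distributive law between $\C{C}_G$ and the subgroupoid $\C{S}\subseteq\pi_1(G)$; and second, to descend this restriction through the quotient by $\C{S}$ and identify the resulting twisted product with $\C{M}_{G'\downarrow G}$ via Proposition~\ref{Factorization}.

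For the first task, I would make $\bowtie$ explicit on generators. Given a loop $\gamma\in\pi_1(G,w)$ and a canonical arrow $p\in\Hom_{\C{C}_G}(v,w)$, uniqueness of canonical paths in $\C{C}_G$ forces the factorisation $\gamma p=p\gamma'$ in $\C{F}_G$, so $\bowtie(\gamma,p)=(p,p^{-1}\gamma p)$. If $\gamma\in\C{S}_w$ fixes the fibre $f^{-1}(w)$ pointwise, then for any $x\in f^{-1}(v)$ the conjugate $p^{-1}\gamma p$ transports $x$ to $f^{-1}(w)$ via the structure bijection of $p$, fixes it via $\gamma$, and transports it back via $p^{-1}$, so it returns $x$. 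Hence $p^{-1}\gamma p\in\C{S}_v$, which is exactly the normality $\beta\C{S}_v\beta^{-1}=\C{S}_w$ recorded in the text. The restriction therefore defines $\bowtie\colon\C{S}\times_V\C{C}_G\to\C{C}_G\times_V\C{S}$, and the left/right transposition axioms are inherited directly from those of the ambient $\bowtie$.

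For the second task, because the restricted law lands in $\C{C}_G\times_V\C{S}$, the ambient $\bowtie$ descends to a well-defined distributive law $\overline{\bowtie}\colon(\pi_1(G)/\C{S})\times_V\C{C}_G\to\C{C}_G\times_V(\pi_1(G)/\C{S})$ on the quotient. To identify $\C{C}_G\bowtie(\pi_1(G)/\C{S})$ with $\C{M}_{G'\downarrow G}$, I would view the monodromy groupoid as the quotient $\C{F}_G/\C{S}$ and apply Proposition~\ref{Factorization} with $\C{K}=\C{C}_G$ and $\C{H}=\pi_1(G)/\C{S}$: the factorisation bijections~\eqref{FactorizationSystem} descend directly from those already established for $\C{F}_G\cong\C{C}_G\bowtie\pi_1(G)$, since each $\Hom$-set of the quotient is the $\C{S}$-orbit set of the corresponding $\Hom$-set of $\C{F}_G$ and orbits are indexed by cosets in $\pi_1(G,v)/\C{S}_v$. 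The step I expect to be the main obstacle is confirming that the quotient is indeed a groupoid compatible with the twisted multiplication; once the explicit formula $\bowtie(\gamma,p)=(p,p^{-1}\gamma p)$ and the normality of $\C{S}$ are in hand, everything else reduces to routine bookkeeping with the factorisations already established for $\C{F}_G$.
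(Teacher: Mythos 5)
The paper offers no proof of this proposition (it is introduced with ``one can easily see''), and your argument supplies exactly the verification the author leaves to the reader: the explicit formula $\bowtie(\gamma,p)=(p,p^{-1}\gamma p)$ forced by the uniqueness of canonical arrows, the observation that $p^{-1}\C{S}_wp=\C{S}_v$ (the normality already recorded in the text) so that the law restricts to $\C{S}$ and descends to $\pi_1(G)/\C{S}$, and the descent of the factorisation bijections of Lemma~\ref{FundamentalGroup} to $\C{F}_G/\C{S}$ so that Proposition~\ref{Factorization} applies. This is correct and is the intended argument; no issues.
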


Assume $f\colon G'\to G$ is a finite unramified connected covering of a connected
graph $G$.  By abuse of notation, let us use $\C{M}_{G'\downarrow G}$ to denote the
algebra $\Bbbk[\C{M}_{G'\downarrow G}]$ of the covering.
\begin{theorem}\label{LocalCoefficients}
  The extension $\C{C}_G\subseteq \C{M}_{G'\downarrow G}$ is reduced flat, and we
  have isomorphisms in Hochschild and cyclic homologies (written here only for
  Hochschild homology)
  \[ HH_n(\C{M}_{G'\downarrow G}) \cong HH_n(\C{M}_{G'\downarrow G}| \C{C}_G)\cong
    HH_n(\C{M}_v) \] for every $n\geq 1$ and $v\in V$.  And, since we assume
  $char(\Bbbk)=0$, we have $HC_n(\C{M}_{G'\downarrow G})\cong \Bbbk$ for all
  $n\geq 0$.
\end{theorem}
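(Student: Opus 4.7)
The strategy is to recognize $\C{C}_G\subseteq \C{M}_{G'\downarrow G}$ as a reduced flat Galois fibration with fibre $C:=\Bbbk[\pi_1(G)/\C{S}]$, so that Theorems~\ref{MainHJZ} and~\ref{MainCJZ} can be applied directly. The proposition immediately preceding the theorem already supplies the relevant distributive law and identifies $\C{M}_{G'\downarrow G}$ with the twisted tensor product $\C{C}_G\bowtie\pi_1(G)/\C{S}$ at the groupoid level. Passing to groupoid algebras, I would check that the canonical map $\C{M}_{G'\downarrow G}\otimes_{\C{C}_G}\C{M}_{G'\downarrow G}\to \C{M}_{G'\downarrow G}\bowtie C$ is a $B^e$-isomorphism, and that $\C{C}_G$ equals the invariant subalgebra $\C{M}_{G'\downarrow G}^{C}$ by construction of the stabilizer groupoid $\C{S}$.

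For reduced flatness I would apply Theorem~\ref{AlmostSmoothBegetsReducedFlat}. Faithful flatness of $\C{M}_{G'\downarrow G}$ over $\C{C}_G$ follows from the twisted tensor decomposition, which displays $\C{M}_{G'\downarrow G}$ as free on one side over $\C{C}_G$. Almost smoothness follows because, with $G$ connected and the covering finite, $\C{C}_G$ is Morita equivalent to $\Bbbk$ while $\C{M}_{G'\downarrow G}$ is Morita equivalent to the finite-dimensional group algebra $\Bbbk[\C{M}_v]$; both target algebras are separable in characteristic zero, so both algebras have Hochschild cohomological dimension zero and $\Sigma_{B|A}$ is automatically $B^e$-projective.

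With this structural data in place, Theorem~\ref{MainCJZ} provides the Jacobi-Zariski long exact sequence. Combining it with the vanishing $HH_n(\C{C}_G)=0$ for $n\geq 1$ (Morita invariance with a semisimple algebra) and with the injectivity of $HH_0(\C{C}_G)\cong\Bbbk\hookrightarrow HH_0(\C{M}_{G'\downarrow G})$ coming from the unit (needed only for the boundary case $n=1$), the sequence forces $HH_n(\C{M}_{G'\downarrow G})\cong HH_n(\C{M}_{G'\downarrow G}\vert\C{C}_G)$ for every $n\geq 1$. Theorem~\ref{MainHJZ} then reinterprets this relative homology as $HH_n(C, \C{M}_{G'\downarrow G}/[\C{M}_{G'\downarrow G},\C{C}_G])$.

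The main obstacle — and the step needing genuine care — is identifying the $C$-bimodule $\C{M}_{G'\downarrow G}/[\C{M}_{G'\downarrow G},\C{C}_G]$. Exploiting the Morita equivalence $\C{C}_G\sim\Bbbk$ via a corner idempotent $e_v$, this quotient $\C{C}_G\otimes_{\C{C}_G^e}\C{M}_{G'\downarrow G}$ collapses to $e_v\C{M}_{G'\downarrow G}e_v\cong \Bbbk[\C{M}_v]$ as the regular bimodule over the $v$-th summand of $C=\bigoplus_{w\in V}\Bbbk[\C{M}_w]$. Because $C$ is a direct sum of mutually orthogonal subalgebras and the bimodule is supported on a single factor, the Hochschild complex $\CH_*(C,\ \cdot\ )$ collapses to $\CH_*(\Bbbk[\C{M}_v])$, yielding the claimed Hochschild isomorphism. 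The cyclic homology assertion then follows by threading the same argument through the cyclic variant of Theorem~\ref{MainCJZ} and invoking Connes' SBI sequence, using semisimplicity of $\Bbbk[\C{M}_v]$ in characteristic zero to trivialise all positive-degree Hochschild groups of $\C{M}_{G'\downarrow G}$.
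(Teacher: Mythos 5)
Your proposal follows the same route as the paper's proof: recognize $\C{C}_G\subseteq\C{M}_{G'\downarrow G}$ as a faithfully flat smooth Galois fibration, deduce reduced flatness from Theorem~\ref{AlmostSmoothBegetsReducedFlat}, and then feed the extension into Theorems~\ref{MainHJZ} and~\ref{MainCJZ} together with the vanishing of rational homology of finite groups. You supply several details the paper leaves implicit — freeness of $\C{M}_{G'\downarrow G}$ over $\C{C}_G$, separability of both algebras in characteristic zero as the source of smoothness, and the $n=1$ boundary case of the Jacobi--Zariski sequence, which indeed needs injectivity of $HH_0(\C{C}_G)\to HH_0(\C{M}_{G'\downarrow G})$ — and these are sound, modulo the standing implicit assumption (shared with the paper) that $V$ is finite, so that $\C{C}_G$ is a unital matrix algebra Morita equivalent to $\Bbbk$.

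Where you genuinely diverge is the identification of the coefficient module: the paper asserts $\C{M}_{G'\downarrow G}/[\C{M}_{G'\downarrow G},\C{C}_G]\cong\Bbbk$ ``by connectedness'', whereas your corner-idempotent computation gives $e_v\C{M}_{G'\downarrow G}e_v\cong\Bbbk[\C{M}_v]$; yours is the version that actually makes Theorem~\ref{MainHJZ} return $HH_n(\C{M}_v)$ rather than the homology of the full fibre algebra $C=\bigoplus_{w}\Bbbk[\C{M}_w]$. However, describing this quotient as ``the regular bimodule supported on a single factor of $C$'' glosses over a real issue: $[\C{M}_{G'\downarrow G},\C{C}_G]$ is not stable under multiplication by $C$, so the naive quotient action is ill-defined — in the quotient a loop $\gamma$ at $v$ is identified with $p_{w,v}\gamma p_{v,w}$ at $w$, so each summand $\Bbbk[\C{M}_w]$ acts through the conjugation isomorphism $\C{M}_w\cong\C{M}_v$ rather than annihilating the module. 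The $C$-bimodule structure that Theorem~\ref{MainHJZ} actually uses is the one transported through $can$; identifying it and checking that $\CH_*(C,\,\cdot\,)$ then collapses to $\CH_*(\Bbbk[\C{M}_v])$ is the step you still owe. Finally, your SBI endgame, like the paper's appeal to Burghelea, gives $HC_{2n}\cong HH_0$ and $HC_{2n+1}=0$ for a semisimple algebra, so neither argument literally delivers $HC_n(\C{M}_{G'\downarrow G})\cong\Bbbk$ for \emph{all} $n\geq 0$ unless the monodromy group has a single conjugacy class; this is a defect of the statement rather than of your argument, but it deserves to be flagged.
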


\begin{proof}
  We have an unramified smooth Galois fibration
  $\C{C}_G\subseteq \C{M}_{G'\downarrow G}$, and
  $\C{M}_{G'\downarrow G}/[\C{M}_{G'\downarrow G},\C{C}_G]$ is $\Bbbk$ since the
  cover is assumed to be connected.  Thus the extension is also reduced flat by
  Theorem~\ref{AlmostSmoothBegetsReducedFlat}.  Now, we use Theorem~\ref{MainCJZ}.
  For the second assertion, we observe that the group (co)homology $H_*(G,\Bbbk)$
  of a finite group $G$ over a field $\Bbbk$ of characteristic 0 is trivial for
  every $n\geq 1$.  Then we use~\cite{Burghelea:CyclicHomologyOfGroupRings}.
\end{proof}

\begin{example}
  Let $C_n$ be the \emph{the cycle graph} on $n$-vertices for $n\geq 3$.  Then for
  every $k\geq 2$, there is a unique connected $k$-cover
  $f_{k,n}\colon C_{kn}\to C_n$.  Figure~\ref{2-3cover} is a depiction of the only
  connected 2-covering $f_{2,3}\colon C_6\to C_3$.
  \begin{figure}[h]
    \centering
      \[\xymatrix{
      \bullet \ar@{..}[dd]\ar@{-}[dr] \ar@{-}[rr] & & \bullet \ar@{-}[dddl]\ar@{..}[dd]\\
      & \bullet \ar@{..}[dd]& \\
      \bullet \ar@{..>}[dd]\ar@{-}[dr] \ar@{-}[rr] & & \bullet \ar@{-}[ul]\ar@{..>}[dd]\\
      & \bullet \ar@{..>}[dd] & \\
      \bullet \ar@{-}[dr] & & \bullet \ar@{-}[ll]\\
      & \bullet \ar@{-}[ur] &  }\]
  \caption{$f_{2,3}\colon C_6\to C_3$, the only connected 2-covering of $C_3$.}
    \label{2-3cover}
  \end{figure}
  On each vertex in $C_n$, the fundamental group is $\B{Z}$.  The stabilizer group
  $\C{S}_v$ of each vertex $v$ in $C_{kn}$ over $C_n$ is $k$-times the generator in
  $C_n$.  So, the monodromy group is $\B{Z}/k$.  Then the relative homologies are
  calculated by Theorem~\ref{LocalCoefficients}.
\end{example}


\end{document}